\documentclass[a4paper,10pt]{amsart}

\usepackage[english]{babel}
\usepackage[utf8]{inputenx}
\usepackage[T1]{fontenc} 
\usepackage{graphicx}         
\usepackage{xcolor}  
\usepackage{hyperref}
\usepackage{tikz-cd}
\usepackage{lmodern}
\usepackage{verbatim}
\usepackage{longtable}

\title{Class number of real quadratic fields of explicit discriminant}

\author{Riccardo Bernardini}
\address{Department of Mathematics, University of Rome ``La Sapienza'', Rome, Italy}
\email{r.bernardini@uniroma1.it}
\date{\today}  

\subjclass[2020]{Primary 11R29; Secondary 11R11, 11A55}  
\keywords{Real quadratic fields, class number, continued fractions}  

%% redefine the addresses command to place the addresses before the appendix %%
\makeatletter
\newcommand{\addresshere}{%
  \enddoc@text\let\enddoc@text\relax
}
\makeatother

\begin{document}
\newtheorem{thm}{Theorem}[section]
\newtheorem{defn}[thm]{Definition}
\newtheorem{lem}[thm]{Lemma}
\newtheorem{cor}[thm]{Corollary}
\newtheorem{prop}[thm]{Proposition}
\newtheorem{ex}[thm]{Example}
\theoremstyle{definition}\newtheorem{rmk}[thm]{Remark}

\begin{abstract}
In this paper we are interested in the class numbers of a family of real quadratic fields for which the square roots of the discriminants have a known expansion in continued fraction. In particular we prove that $h(D)>1$, with possibly a finite number of exceptions.
\end{abstract}

\maketitle

%INTRODUZIONE
\section{Introduction}
One of the most important invariants of a number field is its class number. This is quite mysterious and, in spite of many efforts and abundance of literature, there are still lots of unsolved problems. For instance, Gauss' famous conjecture asserts the existence of infinitely many real quadratic fields with class number one. 
Gauss' Conjecture is notably difficult to attack: as shown by the class number formula, the essence of the issue is to find good estimates for the growth rate of the fundamental unit in relation to the discriminant. Indeed, if $\mathbb{Q}(\sqrt{D})$ has discriminant $D$ and class number $h(D)$, the class number formula and an ineffective theorem by Siegel (see Davenport \cite{Davenport}) provide the following inequality
\begin{equation}\label{diseq an 1}
h(D)\gg_\epsilon \frac{D^{\frac{1}{2}-\epsilon}}{\log{\xi_D}},
\end{equation}
where $\xi_D$ is the fundamental unit and $\gg_\epsilon$ means that the left-hand side is greater than the right-hand side multiplied by a positive constant depending on $\epsilon$. 
Clearly if $\xi_D$ does not grow too fast in relation to $D$ the class number will be eventually bigger than any constant. We have plenty of examples of this: the are many explicit families with $\log{\xi_D}$ growing like $\log{D}$ or $(\log{D})^2$, so the class number tends to $\infty$. Thus two interesting questions arise. One is: what is the greatest order of growth of $h(D)$? And the other is: for the explicit families of discriminants available in the literature (see for example Halter-Koch \cite{H-K 1} or McLaughlin and Zimmer \cite{McZimmer}) from what point on do they produce class numbers greater than one?
Regarding the first question we mention Lamzouri's \cite[Theorem 1.2]{Lamzouri}, where he gives a very precise quantitative result on class numbers as large as possible, i.e. he proved that
\begin{equation}\label{ineq Cher}
h(D)\geq (2e^\gamma + o(1))\sqrt{D}\frac{\log{\log{D}}}{\log{D}},
\end{equation}
where $\gamma$ is the Euler-Mascheroni constant, is verified by at least $x^{1/2-1/\log\log{x}}$ and at most $x^{1/2+o(1)}$ fundamental discriminants $\leq x$. Some years later, in 2023, it was proved in \cite{Cherubini} that one can produce an arbitrary number of consecutive real quadratic fields with class number satisfying an inequality like \eqref{ineq Cher}.

In this paper we focus on the second question and we consider the family of real quadratic fields with discriminants described in \cite[Theorem 6]{McZimmer} by McLaughlin and Zimmer
\begin{align}\label{D McZim}
D(b,s,k)=(4bs+1)^k+(b(4bs+1)^k+s)^2
\end{align}
with $b,s,k>0$. 
Our main result is

\begin{thm}\label{thm 1}
Let $b\geq0$ and $s,k>0$ be such that $b+s$ is odd and $D=D(b,s,k)$ (as in equation \eqref{D McZim}) is squarefree. If $41\cdot61\cdot175\cdot1861|b$, then $h(D)>1$ unless $b=0$ and $s=1$.
\end{thm}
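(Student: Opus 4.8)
The plan is to produce an explicit non-principal ideal in the maximal order of $K=\mathbb Q(\sqrt D)$. Write $q=4bs+1$, $n=q^{k}$ and $A=bn+s$, so that $D=A^{2}+n$. Since $q\equiv1\pmod4$, the integer $bq^{k}+s\equiv b+s\pmod4$ is odd, whence $D=q^{k}+(bq^{k}+s)^{2}\equiv2\pmod4$; as $D$ is squarefree, the maximal order is $\mathcal O=\mathbb Z[\sqrt D]$, the field discriminant is $4D$, and $2$ ramifies. The key input is the continued fraction expansion of $\sqrt D$ from \cite[Theorem 6]{McZimmer}: for $b\ge1$ one has $\lfloor\sqrt D\rfloor=A$ (because $0<D-A^{2}=n\le 2A$), the period equals the odd number $2k+1$, and --- the feature I want to extract --- the denominators $Q_i$ of the complete quotients $(P_i+\sqrt D)/Q_i$ run through exactly the powers $q^{0},q^{1},\dots,q^{k}$. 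For $b=0$ one has $D=s^{2}+1$ and $\sqrt D=[s;\overline{2s}]$, so the only $Q_i$ is $1$.

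Next I would set up the norm obstruction that the continued fraction imposes. If $\alpha=u+v\sqrt D\in\mathcal O$ satisfies $0<|N(\alpha)|<\sqrt D$, then $v\neq0$, and writing $d=\gcd(u,v)$ we have $d^{2}\mid N(\alpha)$, so after dividing $\alpha$ by $d$ we may assume $\gcd(u,v)=1$. By Legendre's classical criterion (in the form valid for both signs of $u^{2}-Dv^{2}$, which applies since $|u^{2}-Dv^{2}|<\sqrt D$), $u/v$ is a convergent of $\sqrt D$, so $|N(\alpha)|=d^{2}Q_i$ for some $i$: a perfect square times a power of $q$. Hence, for any rational prime $p<\sqrt D$ with $p\neq q$, no element of $\mathcal O$ has norm $\pm p$, and therefore every ideal of $\mathcal O$ of norm $p$ is non-principal.

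Applying this with $p=2$ finishes the argument. Since $2$ ramifies we have $(2)=\mathfrak p_2^{2}$ with $\mathfrak p_2=(2,\sqrt D)$ of norm $2$; as $q$ is odd, $2\neq q$, and $2<\sqrt D$ as soon as $D>4$. The hypothesis $41\cdot61\cdot175\cdot1861\mid b$ forces $b=0$ or $b\ge41\cdot61\cdot175\cdot1861$; in the second case $D=A^{2}+n$ is far larger than $4$, while if $b=0$ then $D=s^{2}+1>4$ unless $s=1$. Thus, outside the excepted pair $(b,s)=(0,1)$, the ideal $\mathfrak p_2$ is non-principal and $h(D)>1$.

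The main obstacle is pinning down the exact form of the continued fraction of $\sqrt D$: carrying out the floor computations ($a_1=2b$, $a_2=2bq^{k-1}$, $\dots$) and confirming that the resulting $Q_i$ are precisely $q^{0},\dots,q^{k}$ --- using $2s<q\le n$ --- is the only non-routine step, and it is essentially \cite[Theorem 6]{McZimmer}. I would also note that the divisibility hypothesis on $b$ gives considerable extra room: for $p\mid b$ one has $D\equiv s^{2}+1\pmod p$, so for suitable residues of $s$ each of the primes $p\in\{5,7,41,61,1861\}$ (all coprime to $q$) splits or ramifies and likewise yields a non-principal prime ideal of norm $<\sqrt D$; this is the route one would follow if one preferred to avoid the prime $2$.
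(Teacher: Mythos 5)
Your plan is correct, and it takes a genuinely different --- and far more elementary --- route than the paper. The paper deploys its entire analytic apparatus (Shintani's formula, the Bir\'o--Granville evaluation of $\zeta_{P(K)}(0,\chi)$, and the character computations of Appendix~\ref{app A}) for one purpose only: to produce, under the hypothesis $41\cdot61\cdot175\cdot1861\mid b$, a modulus $q\in\{5,7,41,61,1861\}$ for which $D$ is a quadratic residue, hence a non-inert rational prime $q<\sqrt D$ (Proposition~\ref{teorema num class}); only then does it run the Legendre/continued-fraction argument to show that $\mathcal O_K$ contains no element of norm $\pm q$. You notice that the hypothesis $b+s$ odd already supplies such a prime for free: $D\equiv2\pmod4$, so $2$ ramifies and $(2,\sqrt D)$ has norm $2$, while the denominators $Q_i$ of the complete quotients of $\sqrt D$ are all powers of the odd number $\tau=4bs+1$ (one checks $P_{2j}=b\tau^k-s$, $Q_{2j}=\tau^j$, $P_{2j+1}=b\tau^k+s$, $Q_{2j+1}=\tau^{k-j}$, which is exactly the computation behind McLaughlin--Zimmer's expansion), so no element has norm $\pm2$ once $D>4$. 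This bypasses Sections~\ref{family nn+1} and~\ref{proof thm 1} entirely, never uses the divisibility hypothesis on $b$, and therefore proves the strictly stronger statement that $h(D)>1$ for every admissible $(b,s,k)$ with $D>2$. Your conclusion is independently confirmed by genus theory: a squarefree $D\equiv2\pmod4$ with $D>2$ has at least two ramified primes, so $2\mid h^+$, and the odd period $2k+1$ forces $N(\xi_D)=-1$, whence $h=h^+\ge2$. The trade-off is clear: your argument (like the genus-theory one) lives entirely off the congruence $D\equiv2\pmod4$ that the paper imposes for convenience, and it would evaporate for the congruence class $D\equiv1\pmod4$ of Bir\'o's original family, where neither the prime $2$ nor genus theory is available and the character machinery is genuinely indispensable; the paper's method is the one that generalizes, but for the family actually treated in Theorem~\ref{thm 1} it is not needed.

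Two small points to tidy. First, ``$0<|N(\alpha)|<\sqrt D$ implies $v\neq0$'' is not literally true (if $v=0$ then $|N(\alpha)|=u^2$); this case is harmlessly absorbed into your formula $|N(\alpha)|=d^2Q_i$ with $Q_i=1$ and is irrelevant for $|N(\alpha)|=2$. Second, the statement that the $Q_i$ are exactly $\tau^0,\dots,\tau^k$ is not the literal content of \cite[Theorem 6]{McZimmer}, which lists the partial quotients; you should either extract it from their proof or include the short induction on $(P_i,Q_i)$ indicated above --- you correctly flag this as the one non-routine step.
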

We immediately explain the reasons behind the choice of $D(b,s,k)$ and the numbers $41,61,175,1861$ in the statement of Theorem \ref{thm 1}. 
As the reader will notice, Theorem \ref{thm 1} is in the same fashion of \cite[Theorem 1.1]{Biro2} where Biró considered the family
\begin{align}\label{D Biro}
D_{Biró}(b,s,k)=4(bs+1)^k + (b(bs+1)^k+s)^2
\end{align}
with $b\geq0,s>0,k\geq 2$ under the hypothesis that it is squarefree (this also implies $D\equiv 1 \pmod{4}$).
One common aspect between the families \eqref{D McZim} and \eqref{D Biro} is that, if $(1,\omega_D)$ is the standard integral basis of the ring of integers of $\mathbb{Q}(\sqrt{D})$ with $D=D(b,s,k),D_{Biró}(b,s,k)$, i.e. $\omega_D=(1+\sqrt{D})/2$ if $D\equiv 1 \pmod{4}$ and $\sqrt{D}$ otherwise, and
\begin{equation*}
\omega_D=[b_0,\overline{b_1,\ldots,b_l}],
\end{equation*}
the coefficients $b_j$ for $j\neq 0,l$ are divisible by $b$. A slight difference is that the continued fraction of the square root of \eqref{D McZim} has a factor 2 which appears in the coefficients; we will meet this factor again throughout the proofs. 
About the numbers $41,61,175,1861$, they are so chosen because they behave well in relation to the computations. As it will be better explained in section \ref{outline} the strategy is to progressively erase residue classes by mean of explicit Dirichlet characters of conductors $41,61,175,1861$; since the choice of these characters is not unique, it is possible to exhibit different finite sets of numbers which make Theorem \ref{thm 1} (and also \cite[Theorem 1.1]{Biro2}) work. 
Furthermore, Biró got information about \eqref{D Biro} by looking at it  modulo a prime $q$ that divides $b$ and so reducing to the case of Yokoi's family $n^2+4$, which he had previously studied in \cite{Biro1}. On the other hand, \eqref{D McZim} modulo a prime $q$ dividing $b$ becomes $n^2+1$. 
This suggests to analyze firstly the discriminants of the form $n^2+1$ and then to deduce information about \eqref{D McZim} as to Biró did in his papers.
Another analogy is that both $\mathbb{Q}(\sqrt{n^2+4})$ and $\mathbb{Q}(\sqrt{n^2+1})$, with odd $n$, have fundamental unit with negative conjugate: we will need this in a technical step that involves Shintani's zeta function into the computations (see section \ref{family nn+1}).
The condition $b+s\equiv 1 \pmod{2}$ is required so that $D\equiv 2 \pmod{4}$: if $D(b,s,k)\equiv 1 \pmod{4}$ we need the expansion of $(1+\sqrt{D(b,s,k)})/2$ which leads to a bit harder computations and we loose some analogies with the family \eqref{D Biro}.

We put emphasis on the similarities between \eqref{D McZim} and \eqref{D Biro} because it is important to fulfill some requirements in order to use Biró's strategy at full strength. In fact, in spite of being quite ingenious, his methods seem to be built \textit{ad hoc}.

We end the introduction with some final remarks and results.
\begin{rmk}
If $bsk=0$ then the discriminant $D(b,s,k)$ in \eqref{D McZim} becomes $n^2+1$ where $n=b+s$. If $n$ is even it has already been investigated by Biró in \cite{Biro3}, who exhibited the complete list of real quadratic fields with class number one. If $n$ is odd we will give a proof in section \ref{family nn+1} that the class number is always at least two with one exception (see Corollary \ref{cor 2 thm 2}).
\end{rmk}

We also notice that if $(b,s)>2$ with $b,s>0$ or $4bs+1$ is not a prime, then Theorem \ref{thm 1} can be drastically improved by removing almost all the hypotheses and giving a complete characterization of the values of $D$ for which $h(D)=1$; this is the statement of the next two propositions, which will be proved in section \ref{proof prop 1}.

\begin{prop}\label{prop 1}
If $D=D(b,s,k)$ squarefree is as in equation \eqref{D McZim} and $(b,s)>2$, then $h(D)>1$.
\end{prop}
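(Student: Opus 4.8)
\medskip
\noindent\emph{Proof idea.} The plan is to exhibit an ideal of $\mathcal{O}_K$, where $K:=\mathbb{Q}(\sqrt D)$, that cannot be principal, using the continued fraction expansion of $\sqrt D$.

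Put $d:=(b,s)\geq 3$, $N:=4bs+1$ and $m:=bN^{k}+s$, so that $D=N^{k}+m^{2}$. Since $d^{2}\mid bs$, we have $N\equiv N^{k}\equiv 1\pmod{d^{2}}$ and $d\mid m$, hence $D\equiv 1\pmod{d^{2}}$. In particular every odd prime $p\mid d$ is unramified with $\left(\tfrac{D}{p}\right)=1$, hence split in $K$; the prime $2$ splits as well when $2\mid d$ and $D\equiv 1\pmod 8$. Keeping at each prime only the part that occurs as the norm of a prime ideal, one obtains a divisor $d^{*}\mid d$ with $3\leq d^{*}\leq d$ together with a primitive ideal $\mathfrak{a}$ such that $N\mathfrak{a}=d^{*}$ and $\mathfrak{a}\neq\overline{\mathfrak{a}}$ (the reduction from $d$ to $d^{*}$ removes a factor $2$ only in the single awkward case $2\mid d$, $D\equiv 5\pmod 8$, and $d>2$ still forces $d^{*}\geq 3$).

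Suppose, for a contradiction, that $h(D)=1$; then $\mathfrak{a}=(\alpha)$ with $|N\alpha|=d^{*}$. Writing $\alpha$ in the integral basis of $\mathcal{O}_K$ produces coprime integers $x,y$ with $x^{2}-Dy^{2}=\pm d^{*}$ (when $D\equiv 2,3\pmod 4$) or with $x^{2}-Dy^{2}\in\{\pm d^{*},\pm 4d^{*}\}$ (when $D\equiv 1\pmod 4$); from $\mathfrak{a}\neq\overline{\mathfrak{a}}$ and $D>d^{*}$ one checks $x,y\neq 0$, so we may take $x,y>0$. Because $m\geq N^{k}+1$ we get $D>N^{2k}$, hence $\sqrt D>N^{k}\geq N\geq 4d^{2}+1$, and since $d\geq 3$ this yields $4d^{*}<\tfrac12\sqrt D$. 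Consequently $|x/y-\sqrt D|=\frac{|x^{2}-Dy^{2}|}{y(x+y\sqrt D)}<\frac{|x^{2}-Dy^{2}|}{y^{2}\sqrt D}<\frac{1}{2y^{2}}$, so by Legendre's theorem $x/y$ is a convergent $p_{i}/q_{i}$ of $\sqrt D$; hence $d^{*}$ (or $4d^{*}$) equals $|p_{i}^{2}-Dq_{i}^{2}|=Q_{i+1}$, one of the denominators of the complete quotients of $\sqrt D$.

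It remains to pin down these denominators. Running the continued fraction algorithm for $\sqrt D=\sqrt{N^{k}+m^{2}}$, $m=bN^{k}+s$ — which is essentially the content of \cite[Theorem~6]{McZimmer} — one finds $a_{0}=m$, period length $2k+1$, and complete quotients of the two forms $\tfrac{m+\sqrt D}{N^{j}}$ and $\tfrac{(bN^{k}-s)+\sqrt D}{N^{j}}$; explicitly $Q_{0}=1$, $Q_{1}=N^{k}$, $Q_{2}=N$, $Q_{3}=N^{k-1}$, $Q_{4}=N^{2}$, and so on, the set $\{Q_{1},\dots,Q_{2k+1}\}$ being equal to $\{1,N,N^{2},\dots,N^{k}\}$. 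Hence $d^{*}$ or $4d^{*}$ belongs to $\{1,N,\dots,N^{k}\}$. But $d^{*}\geq 3$ excludes the value $1$; the value $d^{*}=N^{j}$ with $j\geq 1$ is impossible because $N\geq 4d^{2}+1>d\geq d^{*}$; and $4d^{*}=N^{j}$ is impossible because $N$ is odd. This contradiction proves $h(D)>1$.

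The step I expect to be the main obstacle is the continued fraction computation itself, namely verifying the displayed pattern of the $Q_{i}$ and excluding a shorter period or any stray small value of $Q_{i}$: this is a direct but slightly lengthy induction on the algorithm. A minor secondary point is the bookkeeping at the prime $2$ in the construction of $\mathfrak{a}$; the concluding comparison of sizes and parities is then immediate.
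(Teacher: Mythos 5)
Your argument is correct, and while it follows the same overall strategy as the paper (a prime dividing $(b,s)$ splits because $D\equiv 1\pmod{(b,s)^2}$; class number one then yields an element of small norm whose quotient of coordinates must be a convergent of $\sqrt D$; the continued fraction of $\sqrt D$ forbids this), the endgame is genuinely different. The paper picks a single prime $q\mid(b,s)$ and combines Schmidt's two-sided bound $\tfrac{1}{b_{j+1}+2}\leq B|\omega|\leq\tfrac{1}{b_{j+1}}$ with $\tfrac{Bq}{2B\sqrt D+q/(B\sqrt D)}\leq B|\omega|\leq\tfrac{q}{\sqrt D}$, splitting into cases on the position $j$ in the period and, when $D\equiv 1\pmod 4$, first converting the expansion of $\sqrt D$ into that of $\tfrac{1+\sqrt D}{2}$ via Lemma \ref{lemma divisione}. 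You instead identify the possible values $|p_i^2-Dq_i^2|$ exactly with the complete-quotient denominators $Q_{i+1}$, compute that over a period these form the set $\{1,\tau,\dots,\tau^k\}$ (your $N$ is the paper's $\tau=4bs+1$), and kill both candidates at once: $d^*$ by size ($3\leq d^*\leq d<\tau$) and $4d^*$ by parity. This buys you a uniform treatment of $D\equiv 1\pmod 4$ without ever expanding $\tfrac{1+\sqrt D}{2}$, which is a real simplification; the price is that you must actually establish the pattern of the $Q_i$, which is not quoted in \cite{McZimmer} in that form, whereas the paper only needs the partial quotients $a_j$ as stated there. (The verification is the routine induction you anticipate: $P_{2j+1}=b\tau^k+s$, $P_{2j}=b\tau^k-s$, $Q_{2j-1}=\tau^{k-j+1}$, $Q_{2j}=\tau^{j}$.)

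Two points deserve a line more of care. First, your whole $d^*$ apparatus is heavier than necessary: a single prime $q\mid(b,s)$ suffices, exactly as in the paper, and reduces the bookkeeping at $2$ to one observation. Second, as written, the claim ``$d>2$ still forces $d^*\geq 3$'' in the awkward case $2\mid d$, $D\equiv 5\pmod 8$ is not quite self-contained: if $d$ were a power of $2$ its odd part is $1$. It is saved by noting that $d=2^t>2$ gives $D\equiv 1\pmod{2^{2t}}$ with $2t\geq 4$, hence $D\equiv 1\pmod 8$ and $2$ splits after all, so the inert-$2$ case only occurs when $d$ has an odd prime factor. With these details filled in, the proof stands.
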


\begin{prop}\label{prop 2}
If $D=D(b,s,k)$ squarefree is as in equation \eqref{D McZim} and $4bs+1$ is not a prime, then $h(D)>1$.
\end{prop}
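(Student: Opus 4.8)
The plan is to produce an explicit non‑principal prime ideal. Write $A=4bs+1$ and $n=b(4bs+1)^k+s=bA^k+s$, so that by \eqref{D McZim} we have $D=n^2+A^k$; note that $b,s,k\ge 1$ here (as in \eqref{D McZim}), hence $A\ge 5$. Since $A$ is composite, fix a prime $p\mid A$ with $1<p<A$. First I would record two elementary observations: any common divisor of $A$ and $s$ divides $(4bs+1)-4b\cdot s=1$, so $\gcd(A,s)=1$ and in particular $p\nmid s$; and $A\mid A^k$ gives $D\equiv n^2\equiv s^2\pmod A$, hence $D\equiv s^2\pmod p$. As $p$ is odd and $p\nmid s$, this forces $\bigl(\tfrac{D}{p}\bigr)=1$, so $p$ splits: $(p)=\mathfrak p\,\overline{\mathfrak p}$ with $\mathfrak p\neq\overline{\mathfrak p}$.

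Next I would show that $\mathfrak p$ is not principal, which gives $h(D)>1$. Suppose $\mathfrak p=(\alpha)$. If $D\equiv 2,3\pmod 4$ then $\alpha=x+y\sqrt D$ with $x,y\in\mathbb Z$ and $x^2-Dy^2=\pm p$; if $D\equiv 1\pmod 4$ then $\alpha=(x+y\sqrt D)/2$ with $x\equiv y\pmod 2$ and $x^2-Dy^2=\pm 4p$. Since $b\ge 1$ we have $D=n^2+A^k>A^{2k}$, hence $p\le A\le A^k<\sqrt D$. By the classical dictionary between small‑norm solutions of the Pell‑type equation and the continued fraction expansion of $\sqrt D$ (respectively of $\omega_D=(1+\sqrt D)/2$ when $D\equiv 1\pmod 4$), such a representation with $0<p<\sqrt D$ can occur only if $p$ equals one of the denominators $Q_i$ appearing in the complete quotients $(P_i+\sqrt D)/Q_i$. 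But the explicit expansion shows that these $Q_i$ run through powers of $A$ only: indeed $Q_0=1$, $Q_1=A^k$, and from the identity $D-(n-2s)^2=A^k+4s(n-s)=A^k(1+4bs)=A^{k+1}$ one computes $\alpha_2=(\sqrt D+(n-2s))/A$, so $Q_2=A$; iterating gives $Q_3=A^{k-1}$, $Q_4=A^2$, and so on, the exponents cycling inside $\{0,1,\dots,k\}$ (this can also be read off from \cite[Theorem 6]{McZimmer}). Since $1<p<A$, $p$ is not a power of $A$, a contradiction. Hence $\mathfrak p$ is non‑principal and $h(D)>1$.

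The only genuinely delicate point is the continued‑fraction dictionary in the case $D\equiv 1\pmod 4$: there one must pass from $x^2-Dy^2=\pm 4p$ with $x,y$ odd to the expansion of $\omega_D$ and check that its complete‑quotient denominators are again powers of $A$ (when $x,y$ are even one simply divides by $2$ and reduces to the $\sqrt D$ case). I would handle this by quoting the explicit expansion rather than recomputing it; the remaining ingredients — the splitting of $p$, the estimate $p<\sqrt D$, and reading off the $Q_i$ — are routine.
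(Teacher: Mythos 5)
Your overall strategy --- exhibit a split prime $p\mid 4bs+1$ and rule out principality of the primes above it by showing that the complete-quotient denominators $Q_i$ of $\sqrt D$ are all powers of $A=4bs+1$ while $1<p<A$ --- is genuinely different from the paper's. The paper instead takes $q$ to be the \emph{smallest} prime factor of $\tau=4bs+1$ (so that $q\le\tau^{1/2}$), runs the convergent argument through the partial quotients as in the proof of Theorem \ref{thm 1}, and then still needs both the inequality $\tau^{\log_96}\le(b\tau^k+s)/(b\tau^{k-1}+1)$ and a finite PARI/GP check for $b=k=1$ (e.g.\ $D=130$ survives all the inequalities and is settled only by computation). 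Your use of the $Q_i$ and of the observation that $p$ cannot be a power of $A$ is sharper: for $D\equiv 2\pmod 4$ the argument is complete, needs no computer check, and the identification $Q_0=1$, $Q_1=A^k$, $Q_2=A,\dots$ is correct and consistent with \eqref{expansion 1}.

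The case $D\equiv 1\pmod 4$, however, which occurs exactly when $b+s$ is even and cannot be excluded in Proposition \ref{prop 2}, contains a genuine gap, and the repair you sketch would not work as stated. You propose to pass to the expansion of $\omega_D=(1+\sqrt D)/2$ and ``check that its complete-quotient denominators are again powers of $A$''. They are not: for $b=1$, $s=5$, $k=1$ one has $A=21$, $D=697\equiv1\pmod4$, $(1+\sqrt{697})/2=[13,\overline{1,2,2,1,25}]$ (as in Lemma \ref{lemma divisione 2}), and the complete-quotient denominators are $36,16,16,36,2$ --- none a power of $21$. This is forced by the many partial quotients equal to $1$, which make the denominators comparable to $\sqrt D$. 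Moreover, in the sub-case $x,y$ odd you need the norm bound $4p<\sqrt D$ before any continued-fraction dictionary applies, and your estimate $p<A\le A^k<\sqrt D$ does not give it: for $b=k=1$ one only has $\sqrt D>A+s$, whereas an arbitrary proper prime factor $p$ of $A$ can be as large as $A/3$, so $4p$ can exceed $\sqrt D$. Both defects are repairable: take $p$ to be the \emph{smallest} prime factor of $A$, so $p\le\sqrt A$ and $4p\le4\sqrt A<A+1\le n<\sqrt D$ (valid since the only composite value $A=4bs+1<14$ is $A=9$, which forces $b+s$ odd, hence $D\equiv2\pmod4$); then stay with the expansion of $\sqrt D$ throughout: from $x^2-Dy^2=\pm4p$ with $x,y$ odd and coprime and $4p<\sqrt D$, the dictionary forces $4p=Q_i$ for some $i$, which is impossible because every $Q_i$ is a power of the odd number $A$ while $4p$ is even. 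As written, though, the step you yourself flag as the delicate one is deferred to a verification that would fail.
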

Here we exclusively rely on the theory of continued fractions and we use a method, already present in \cite{Biro2}, that we apply also in the end of section \ref{proof thm 1}.

Since Biró in \cite{Biro2} uses some computations already done in \cite{Biro1}, in section \ref{family nn+1} we perform the analogous ones by PARI/GP for the family $n^2+1$ and then we use them in section \ref{proof thm 1} to prove Theorem \ref{thm 1}. An example of the code will be shown in the appendices. 

We report the main result about the family $n^2+1$ which corresponds to the theorem shown in \cite[§1]{Biro1}. 

\begin{thm}\label{thm 2}
If $D=n^2+1$, with $n$ odd, is squarefree and $h(D)=1$, then $D$ is a quadratic residue modulo at least one of the following: $5,7,41,61,1861$.
\end{thm}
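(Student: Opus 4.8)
The plan is to transplant Biró's treatment of Yokoi's family $n^2+4$ from \cite{Biro1} to the family $D=n^2+1$ with $n$ odd; the only structural novelty is that here $D\equiv 2\pmod 4$ rather than $D\equiv 1\pmod 4$, which forces the recurring factor $2$ mentioned in the introduction to be carried through the argument.

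First I would fix the arithmetic of $K=\mathbb Q(\sqrt D)$ for $D=n^2+1$ squarefree with $n$ odd: one has $D\equiv 2\pmod 8$, so $\mathcal O_K=\mathbb Z[\sqrt D]$, the fundamental unit is $\varepsilon_D=n+\sqrt D$ with $N(\varepsilon_D)=-1$ — hence the narrow and the wide class number coincide and $\varepsilon_D$ has negative conjugate — and $\sqrt D=[\,n;\overline{2n}\,]$ has period one. Assuming $h(D)=1$ and arguing by contraposition, suppose every prime $p\in\{5,7,41,61,1861\}$ is inert in $K$, i.e.\ $\bigl(\tfrac Dp\bigr)=-1$; I must reach a contradiction.

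The core is an exact evaluation, performed in two different ways, of a suitable special value at $s=0$ of a partial Hecke $L$-function of $K$ of conductor $(p)$, for each of the five primes. On one side, since $\varepsilon_D$ and its conjugate have opposite signs, Shintani's formula applies and expresses this value — in Zagier's continued-fraction form, with the Dedekind-sum contributions reduced modulo $p$ — as a short explicit expression in the partial quotients $n$ and $2n$ of $\sqrt D$; because the period has length one, it collapses to an explicit rational quantity $\Phi_p(n)$ polynomial in $n$, with denominator dividing a fixed integer coprime to $p$. On the other side, $h(D)=1$ forces the ray class group of conductor $(p)$ to be the fully explicit group $(\mathcal O_K/p\mathcal O_K)^\times/\langle\bar\varepsilon_D\rangle$, so the same $L$-value equals a generalized Bernoulli number attached to an explicit Dirichlet-type character modulo $p$, whose $p$-adic valuation and residue are classical. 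Equating the two descriptions yields, for each $p$, a congruence $\Phi_p(n)\equiv c_p\pmod p$ that $n$ must satisfy — this is what the PARI/GP computation produces. Running it for $p=5,7,41,61,1861$ gives five conditions on $n$, and one checks by machine, via the Chinese Remainder Theorem modulo $5\cdot7\cdot41\cdot61\cdot1861$, that these are incompatible with having $\bigl(\tfrac{n^2+1}{p}\bigr)=-1$ for all five $p$ at once. That contradiction finishes the proof.

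The main obstacle is the second step: deriving Shintani's formula for the relevant $L$-value in this setting with every normalizing constant correct — in particular the factor $2$ produced by $D\equiv 2\pmod 4$, which does not occur in \cite{Biro1} — and verifying that the denominators that arise are prime to each of the five primes, so that the reductions modulo $p$ are legitimate. Once that identity is in place, what is left is the bookkeeping of finitely many residue classes, which is exactly what the computer checks.
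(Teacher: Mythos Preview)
Your proposal misidentifies the mechanism by which the five primes enter, and this is a genuine gap, not just a difference in presentation.

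In the paper's argument (transplanting \cite{Biro1}), the primes $5,7,41,61,1861$ do \emph{not} all serve as conductors of $L$-functions. The characters actually used are four odd primitive Dirichlet characters of conductors $q\in\{175,61\}$ (note $175=5^2\cdot 7$), composed with the norm to form $\zeta_{P(K)}(s,\chi)$. The Shintani computation (Lemma~\ref{lemma fatto A}) gives $q\,\zeta_{P(K)}(0,\chi)=A_\chi(2n)$. Under $h(D)=1$ one has $\zeta_{P(K)}=\zeta_K$, and the factorization $\zeta_K(s,\chi)=L(s,\chi)L(s,\chi\chi_d)$ shows that $2A_\chi(2n)m_\chi^{-1}$ is an algebraic integer, where $m_\chi=-qL(0,\chi)$ depends only on $\chi$, \emph{not} on $n$ or $D$. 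The decisive step is to choose $\chi$ so that $m_\chi$ lies in a prime ideal $I\subset\mathcal O_{\mathbb Q(\chi)}$ of residue field $\mathbb Z/r\mathbb Z$ with $r\nmid 2q$; the pairs $(q,r)$ used are $(175,61)$, $(175,1861)$, $(61,1861)$, $(61,41)$. Then $A_\chi(2n)\in I$, and writing $n=Pq+n_0$ one has $A_\chi(2n)=A_\chi(2n_0)+2PB_\chi(2n_0)$, which determines $n\bmod r$ from $n_0=n\bmod q$. The primes $5$ and $7$ appear only through the set $U_{175}$ of residues $n_0$ with $n_0^2+1$ a nonresidue modulo both; the primes $41,61,1861$ appear only as the residue characteristics $r$ of the ideals $I$. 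The four characters are then applied \emph{sequentially}, each one eliminating survivors left by the previous ones.

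Your scheme --- take conductor $(p)$ for each of the five $p$, evaluate an $L$-value two ways, extract $\Phi_p(n)\equiv c_p\pmod p$, then CRT --- does not produce this leverage. If both the Shintani side and your ``generalized Bernoulli'' side are computed at the same conductor $p$, they depend on $n\bmod p$ in the same way and you obtain an identity rather than a constraint; moreover the ray class group $(\mathcal O_K/p\mathcal O_K)^\times/\langle\bar\varepsilon_D\rangle$ itself varies with $n$, so there is no fixed ``Dirichlet-type character modulo $p$'' to attach a Bernoulli number to. The whole point of Bir\'o's method is the mismatch between the conductor $q$ of $\chi$ and the residue characteristic $r$ of an ideal dividing the $D$-independent quantity $m_\chi$: that mismatch is what turns the $L$-value identity into a genuine congruence linking $n\bmod q$ to $n\bmod r$. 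Your outline contains no analogue of $m_\chi$ or of the choice of $I$, and without it the five primes cannot be made to interact.
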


The fact that the prime factors of the numbers appearing in Theorem \ref{thm 1} and Theorem \ref{thm 2} are the same is not a coincidence: indeed, as we pointed out above, if $q|b$, then $D(b,s,k)\equiv s^2+1 \mod{q}$.
A consequence of Theorem \ref{thm 2} is that, for $n$ odd and greater than an explicit bound, $h(D)>1$; this is stated in the next corollary.

\begin{cor}\label{cor 1 thm 2}
If $D=n^2+1$, with $n$ odd, is squarefree and $931=\lceil\frac{1861}{2}\rceil\leq n$ then $h(D)>1$.
\end{cor}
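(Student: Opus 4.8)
The plan is to feed Theorem~\ref{thm 2} into an elementary obstruction coming from the continued fraction of $\sqrt D$. Since $n$ is odd, $D=n^2+1\equiv 2\pmod{4}$, and since $D$ is squarefree $\mathcal{O}_K=\mathbb{Z}[\sqrt D]$ for $K=\mathbb{Q}(\sqrt D)$; moreover $\sqrt D=[n,\overline{2n}]$, so the fundamental unit of $\mathcal{O}_K$ is $\varepsilon=n+\sqrt D$, it has norm $N(\varepsilon)=-1$, and — this is the key point — it is as small as possible: $\varepsilon<2\sqrt D$. Assume $h(D)=1$. By Theorem~\ref{thm 2} there is a prime $p\in\{5,7,41,61,1861\}$ for which $D$ is a quadratic residue modulo $p$; in particular $p$ is not inert in $K$, so $\mathcal{O}_K$ contains a prime ideal of norm $p$, and since $h(D)=1$ that ideal is principal. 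Hence there exists $\alpha\in\mathbb{Z}[\sqrt D]$ with $N(\alpha)=\pm p$, and it remains to show that this is impossible for $n$ above a suitable bound and to pin down that bound.

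The main point is the claim: \emph{for $n\geq 3$ and every prime $q<2\sqrt D$ there is no $\alpha\in\mathbb{Z}[\sqrt D]$ with $N(\alpha)=\pm q$.} Suppose such an $\alpha=a+b\sqrt D$ exists. Since $\varepsilon>1$ and $\varepsilon^{-1}=\sqrt D-n\in\mathbb{Z}[\sqrt D]$, after replacing $\alpha$ by $\pm\varepsilon^{k}\alpha$ for a suitable $k\in\mathbb{Z}$ we may assume $1\leq\alpha<\varepsilon$ as a real number, still with $\alpha\in\mathbb{Z}[\sqrt D]$ and $N(\alpha)=\pm q$; write $\alpha=a+b\sqrt D$. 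Then $|\bar\alpha|=q/\alpha\leq q$, so
\[
2|b|\sqrt D=|\alpha-\bar\alpha|\leq\alpha+|\bar\alpha|<\varepsilon+q<(n+\sqrt D)+2\sqrt D=n+3\sqrt D ,
\]
and dividing by $2\sqrt D$ and using $n<\sqrt{n^2+1}=\sqrt D$ yields $|b|<2$. If $b=0$ then $q=a^2$, impossible for a prime; hence $b=\pm 1$ and $a^2=D\pm q=n^2+1\pm q$. Since $0<q<2\sqrt D<2(n+1)$, the integer $a^2$ lies strictly between $(n-1)^2-2$ and $(n+1)^2+2$, so $a^2\in\{(n-1)^2,\,n^2,\,(n+1)^2\}$; substituting back forces $q\in\{1,\,2n\}$, neither of which is prime for $n\geq 3$. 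This contradiction proves the claim.

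The corollary then follows: if $h(D)=1$ and $n\geq 931=\lceil 1861/2\rceil$, then $n\geq 3$ and the element $\alpha$ furnished by the first paragraph satisfies $N(\alpha)=\pm p$ with $p\leq 1861<1862\leq 2n<2\sqrt{n^2+1}=2\sqrt D$, contradicting the claim; hence $h(D)>1$. The heart of the argument, and the step I expect to need the most care, is the claim, though it is elementary once one exploits the smallness of $\varepsilon$. The threshold $\lceil 1861/2\rceil$ — rather than the Minkowski-type bound $n\geq 1861$, which would already follow from the classical fact that a solution of $|x^2-Dy^2|<\sqrt D$ must come from a convergent of $\sqrt D$, all of norm $\pm1$ here — arises precisely because $\varepsilon$ is as small as $2\sqrt D$, so that the norm-form analysis can be pushed all the way up to $q<2\sqrt D$; this is the feature of the family $n^2+1$, analogous to the one exploited for $n^2+4$ in \cite{Biro1}, that makes the estimate work, and it explains why the corollary is phrased for this subfamily.
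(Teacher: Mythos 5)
Your proof is correct and takes essentially the same route as the paper, which deduces the corollary from Theorem \ref{thm 2} together with Proposition \ref{prop 2.3} (resting on Lemma \ref{lemma 2.2}): under $h(D)=1$ no odd prime $r<2n$ can split or ramify, and $1861<2\cdot 931$. Your ``claim'' is just a self-contained re-derivation of that small-norm obstruction via the smallness of the unit $n+\sqrt D$, with the cosmetically different threshold $2\sqrt D$ in place of $2n$ (which admits the same primes).
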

A computational check with PARI/GP completes the study of the family $n^2+1$ giving the full classification.

\begin{cor}\label{cor 2 thm 2}
If $D=n^2+1$, with $n$ odd, is squarefree, then $h(D)>1$ if and only if $n>1$.
\end{cor}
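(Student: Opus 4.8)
The plan is to reduce the statement to the effective bound already established in Corollary~\ref{cor 1 thm 2}, together with a finite computation. The two implications of the equivalence are of rather different natures. For the direction $h(D)>1\Rightarrow n>1$ it suffices to check the contrapositive: the only odd positive integer with $n\le 1$ is $n=1$, for which $D=n^2+1=2$, so $\mathbb{Q}(\sqrt{D})=\mathbb{Q}(\sqrt{2})$, which has class number one; hence $h(D)=1$ when $n=1$, and $h(D)>1$ is possible only for $n>1$. It remains to prove the substantive direction: every odd $n>1$ with $D=n^2+1$ squarefree satisfies $h(D)>1$.

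Since $n$ is odd, the condition $n>1$ means $n\ge 3$. If $n\ge 931=\lceil 1861/2\rceil$, this is precisely Corollary~\ref{cor 1 thm 2}, so nothing further is needed in that range. The remaining cases form the finite list of odd integers $n$ with $3\le n\le 929$ such that $n^2+1$ is squarefree; for each of these one computes the class number of $\mathbb{Q}(\sqrt{n^2+1})$ with PARI/GP (for instance by applying \texttt{quadclassunit} or \texttt{qfbclassno} to the discriminant $4(n^2+1)$, after first discarding the non-squarefree radicands) and verifies that the value is always at least $2$. This search is short because the range is small; one may optionally shorten it further by using Theorem~\ref{thm 2} to drop in advance those $n$ for which $D$ is a quadratic non-residue modulo each of $5,7,41,61,1861$, since for such $D$ one already knows $h(D)\ne 1$. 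Putting the two ranges together gives $h(D)>1$ for all odd $n>1$ with $D$ squarefree, which is the reverse implication; combined with the previous paragraph, this proves the corollary.

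I do not expect a genuine obstacle here: the mathematical content is entirely carried by Corollary~\ref{cor 1 thm 2}, and the rest is bookkeeping. The two points that require care are, first, that squarefreeness must be imposed before invoking any class-number routine, since for non-squarefree $n^2+1$ the field $\mathbb{Q}(\sqrt{n^2+1})$ can perfectly well have class number one (for example $n=7$ gives $D=50$ and $\mathbb{Q}(\sqrt{2})$), which does not contradict the statement; and second, that the threshold $\lceil 1861/2\rceil=931$ is used correctly, so that the computer really checks the whole interval $3\le n\le 929$ and leaves no gap with the range covered by Corollary~\ref{cor 1 thm 2}. Once both are handled, the finite search returns class number at least $2$ in every case, confirming that $n=1$ is the unique exception.
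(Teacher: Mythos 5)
Your proof is correct and follows the same route as the paper: the paper likewise obtains Corollary~\ref{cor 2 thm 2} by combining Corollary~\ref{cor 1 thm 2} (which itself comes from Theorem~\ref{thm 2} and Proposition~\ref{prop 2.3}) for $n\geq 931$ with a finite PARI/GP check of the remaining odd $n$, the case $n=1$ giving $\mathbb{Q}(\sqrt{2})$ with class number one. Your cautionary remarks about imposing squarefreeness before the computation and about leaving no gap at the threshold $931$ are both apt and consistent with the paper's treatment.
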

By Corollary \ref{cor 2 thm 2} and by the paragraph following the Corollary of  \cite[§1]{Biro3} one gets all the values of $n$ for which $n^2+1$ is squarefree and $h(n^2+1)=1$.
\begin{cor}
If $n^2+1$ is squarefree then $h(n^2+1)=1\iff n=1,2,4,6,10,14,26$.
\end{cor}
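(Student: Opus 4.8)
The plan is to assemble the statement from the two cases $n$ odd and $n$ even, each of which has already been handled. First I would dispose of the odd case. By Corollary \ref{cor 2 thm 2}, if $n$ is odd and $n^2+1$ is squarefree, then $h(n^2+1)>1$ precisely when $n>1$; hence the only odd value with $h(n^2+1)=1$ is $n=1$, corresponding to $\mathbb{Q}(\sqrt{2})$, which indeed has class number one. (Implicitly we restrict to $n\geq 1$, since $n=0$ gives $n^2+1=1$, which does not define a quadratic field.)

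Next I would treat the even case by invoking Biró's work. In \cite[§1]{Biro3} the real quadratic fields $\mathbb{Q}(\sqrt{n^2+1})$ with $n$ even and class number one are classified completely, and the paragraph following the Corollary there records the resulting finite list. Intersecting that list with the requirement that $n^2+1$ be squarefree leaves exactly $n\in\{2,4,6,10,14,26\}$; one checks directly that the corresponding values $2,5,37,101,197,677$ are squarefree, while every other small even $n$ either makes $n^2+1$ non-squarefree or yields a field of class number greater than one, as guaranteed by \cite{Biro3}.

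Taking the union of the odd list $\{1\}$ and the even list $\{2,4,6,10,14,26\}$ gives $n\in\{1,2,4,6,10,14,26\}$, which is precisely the claimed equivalence. There is no substantive obstacle here: the argument is a bookkeeping combination of Corollary \ref{cor 2 thm 2} with the explicit classification of \cite{Biro3}, the only care needed being to read off Biró's list correctly and to verify squarefreeness for the finitely many surviving values.
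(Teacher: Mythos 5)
Your argument is correct and is essentially identical to the paper's: the odd case is exactly Corollary \ref{cor 2 thm 2}, the even case is read off from the classification in \cite[§1]{Biro3}, and the corollary is the union of the two lists. (One trivial slip: the squarefree values $n^2+1$ for the even list $\{2,4,6,10,14,26\}$ are $5,17,37,101,197,677$ — you wrote $2$ in place of $17$ — but this does not affect the argument.)
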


%OUTLINE OF THE PROOF
\section{Outline of the proof}\label{outline}

Throughout the rest of the article we will make use of the following notation:
$K$ is the real quadratic field $\mathbb{Q}(\sqrt{D})$ with $D$ squarefree, $d$ is its discriminant, $\mathcal{O}_K$ is the maximal order (i.e. the ring of integers) of $K$, $\xi_D$ (or simply $\xi$ when there is no confusion) is the fundamental unit of $K$, $\chi_d$ is the Kronecker character and $h(D)$ (or simply $h$ when there is no confusion) is the class number. By $\zeta(s)$ we mean the Riemann zeta function, by $\zeta_K(s)$ the Dedekind zeta function, while if $\chi$ is a Dirichlet character we define the twisted Dedekind zeta function as
$$\zeta_K(s,\chi):=\sum_{0\neq\mathfrak{a}\subset\mathcal{O}_K}\frac{\chi(N(a))}{N(a)^s}.$$
Similarly, if we indicate by $P(K)$ the set of principal integral ideals, we can define
\begin{equation*}
\zeta_{P(K)}(s):=\sum_{\frak{a}\in P(K)}\frac{1}{N(a)^s}\quad \mbox{and}\quad \zeta_{P(K)}(s,\chi):=\sum_{\frak{a}\in P(K)}\frac{\chi(N(a))}{N(a)^s}
\end{equation*}
Furthermore, for a given character $\chi$, by $\mathbb{Q}(\chi)$ we mean the number field obtained by adding the values $\chi(a)$ to $\mathbb{Q}$ for every integer $a$.

The first family we are going to study is $n^2+1$ with odd $n$ so that it produces numbers equivalent to $2$ modulo $4$. Thus, if they are squarefree, the standard integral basis is $(1,\sqrt{D})$ and the discriminant of the field $d=4D$. If $\chi$ is a Dirichlet character modulo $q$ with some further properties we construct two algebraic integers as in \cite{Biro1}, namely 
\begin{equation}\label{def m}
m_\chi:=\sum_{a=1}^qa\chi(a)
\end{equation}
and
\begin{equation}\label{def A}
A_\chi(n):=\sum_{0\leq C,D\leq q-1}\chi(D^2-C^2-nCD)\left\lceil\frac{nC-D}{q}\right\rceil\left(C-q\right).
\end{equation}
This crucial step, stated in Lemma \ref{lemma fatto A}, is proved through identities involving the $L$-functions and the zeta functions.
Now suppose $n$ is such that $n^2+1$ is not a square residue for a prime $p$ dividing $q$. Assume moreover that there exists a prime ideal $I$ of $\mathbb{Q}(\chi)$ containing $m_\chi$ laying above an integer prime $r$ different from $q$ and $2$. All this forces $n$ to lay in a precise residue class modulo $I$,
indeed we have that $A_\chi(2n)\in I$ by Lemma \ref{lemma fatto A} and, if we write $n=Pq+n_0$ by the Euclidean division, we get
$$
0\equiv A_\chi(2n)=A_\chi(2n_0)+2PB_\chi(2n_0)\quad (I),
$$ 
where 
\begin{equation}\label{def B}
B_\chi(a):=\sum_{0\leq C,D\leq q-1}\chi(D^2-C^2-aCD)C(C-q),
\end{equation}
and so 
$$
n\equiv n_0 -q\frac{A_\chi(2n_0)}{2B_\chi(2n_0)}\quad (I).
$$
If we choose $I$ such that $\mathcal{O}_{\mathbb{Q}(\chi)}/I\simeq \mathbb{Z}/(r)$, $n$ is then forced to be in a certain class modulo $r$, say $r_0$. We can know ask ourselves if $r_0^2+1$ is a square modulo $r$ or not. If not we find an ideal $I'$ over another prime $r'$ finding new constraints for $n$. This process continues until we find a finite set of numbers so that $n^2+1$ is a square modulo at least one of them. With the aim of making the paper more readable we put the computations in the appendices in the end of the paper.

\begin{rmk}\label{rmk 2.1}
As in \cite{Biro1} the reason we can rely on computation, of which we will insert an example of the PARI/GP program in appendix \ref{app B}, is that we can choose $I$ so that $\mathcal{O}_{\mathbb{Q}(\chi)}/I\simeq \mathbb{Z}/(r)$. Thus we are actually working with integers.
\end{rmk}

We point out that the idea of excluding residue classes, which is behind this article and the works of Biró \cite{Biro1}, \cite{Biro2} and \cite{Biro3} go back until Beck's paper \cite{Beck}. Here he combined formulas for special values of zeta functions associated to the number field or suitable quadratic characters and he could exclude some residue classes. Biró in \cite{Biro1} understood we could enhance his method by involving zeta functions related to nonquadratic Dirichlet characters, so that he had a greater amount of possibilities for erasing residue classes.

The next part of the paper, section \ref{proof thm 1}, begins with a series of technical results consisting in an application to our case of the work of Biró and Granville \cite{BiroGranville}. The general idea is to make use again of the algebraic integer $2A_\chi(2s)m_\chi^{-1}$ by reusing all the calculations of the appendix \ref{app A}, since $A_\chi(2s)\in I$ was the only hypothesis needed. We then deduce that $s^2+1$ is a square residue for at least one of an explicit finite set of modules.
This is also true for $D(b,s,k)$ (see \eqref{D McZim}) since it equals $s^2+1$ modulo a prime $q$ dividing $b$.
Therefore the ideal $(q)\subset\mathbb{Z}$ is not inert when extended in $\mathcal{O}_K$. 
If we assume that $h(D)=1$ (from now until the end of this section we will call $D(b,s,k)$ just $D$) there is in fact an algebraic integer of norm $q$ and, by the standard integral basis $(1,\sqrt{D})$, there are positive integers $A,B$ such that
$$|A^2-B^2D|=q.$$
Following the same lines of \cite[§3]{Biro2}, but extending the proof to the case of $D\not\equiv 1$ modulo $4$ (in \cite{Biro2} only the congruence $1$ modulo $4$ is treated), this will imply that
$$
\frac{Bq}{2B\sqrt{D}+\frac{q}{B\sqrt{D}}}\leq B^2\left|\frac{A}{B}-\sqrt{D}\right| \leq \frac{q}{\sqrt{D}}.
$$
and that $\frac{A}{B}$ is a convergent of $\sqrt{D}$. Calling $b_j$ the coefficients of the continued fraction, as explained by Schmidt \cite[p. 17]{Schmidt} it provides the following estimate
$$
\frac{1}{b_{j+1}+2}\leq B\left|A-B\sqrt{D}\right|\leq\frac{1}{b_{j+1}}.
$$
Combining the all these inequalities together we will force a contradiction. 

The last section is devoted to proving Proposition \ref{prop 1} and Proposition \ref{prop 2} following again the same strategy of \cite[§3]{Biro2}. When $D\equiv 1 $ modulo $4$ the continued fraction of $\frac{1+\sqrt{D}}{2}$ is derived from the one of $\sqrt{D}$ with little effort, see Lemma \ref{lemma divisione}.

%SECTION NN+1
\section{The family $n^2+1$}\label{family nn+1}

Taking inspiration from \cite{Biro1} and \cite{Biro3}, we look for a finite set of primes such that $n^2+1$ is a quadratic residue for at least one of them.
Just throughout this section we indicate by $D$ a squarefree number of the form $n^2+1$ with $n$ odd, so that
$\mathcal{O}_K=\mathbb{Z}[\sqrt{D}]$ and $d=4D$ is the fundamental discriminant.
It is not hard to verify that
$$\xi=n+\sqrt{n^2+1}$$
is the fundamental unit of $\mathcal{O}_K$.
Our first goal is to prove that

\begin{lem}\label{lemma fatto A}
If $D=n^2+1$, with $n$ an odd positive integer, is squarefree and $\chi$ is an odd primitive character of conductor $q$, where $q$ is a positive integer, then $\zeta_{P(K)}(s,\chi)$ can be extended meromorphically to the whole complex plane and it holds the formula
\begin{equation*}
\zeta_{P(K)}(0,\chi)=\frac{1}{q}A_\chi(2n);
\end{equation*}
see \eqref{def A} for the definition of $A_\chi$. Moreover if $(q,d)=1$ and $h(D)=1$ then
$m_\chi\neq0$ and $2A_\chi(2n)m_\chi^{-1}$ is an algebraic integer, where $m_\chi$ is defined in \eqref{def m}.
\end{lem}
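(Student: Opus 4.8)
The plan is to follow Biró's computation in \cite{Biro1} for Yokoi's family, adapting it to the case $D=n^2+1$ with $n$ odd. The starting point is Shintani's formula for the value at $0$ of the partial zeta function of a ray class, which expresses $\zeta_{P(K)}(s,\chi)$ as a finite sum of Hurwitz-type zeta values built from the fundamental unit $\xi=n+\sqrt{n^2+1}$ and its conjugate $\bar\xi=n-\sqrt{n^2+1}$. Since $N(\xi)=-1$, the cone decomposition for the principal ideals is governed by the action of $\xi^2$ (a totally positive generator of the units of norm $1$), and the resulting Shintani cone is spanned by $1$ and $\xi^2$; here is exactly where the hypothesis that $\xi$ has negative norm (noted in the introduction) is used. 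First I would write down explicitly the two-dimensional Shintani cone and the associated lattice $\mathbb{Z}+\mathbb{Z}\sqrt{D}=\mathcal{O}_K$, then apply the Shintani--Zagier closed formula for $\zeta(0)$ of the cone zeta function twisted by $\chi\circ N$; this produces a double sum over a fundamental domain for the lattice of the form $\sum \chi(\text{quadratic form in }C,D)\cdot(\text{linear/ceiling expression})$.

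The key computational step is to identify, after a change of variables sending $(1,\xi^2)$ to the standard basis, the quadratic form $N(Cx+Dy)$ with $D^2-C^2-2nCD$ (note the factor $2n$, matching $A_\chi(2n)$ rather than $A_\chi(n)$), and to recognize the ceiling term $\lceil (2nC-D)/q\rceil$ and the weight $(C-q)$ as coming from the Bernoulli-polynomial pieces $B_1$ of the Shintani formula together with the periodization forced by the conductor $q$. The meromorphic continuation of $\zeta_{P(K)}(s,\chi)$ is automatic since it is a finite $\mathbb{C}$-linear combination of Hurwitz zeta functions (or, equivalently, of Barnes/Shintani zeta functions), each of which continues meromorphically; because $\chi$ is odd and nontrivial the potential pole at $s=1$ cancels, so in fact $\zeta_{P(K)}(s,\chi)$ is entire, and evaluation at $s=0$ gives the stated identity $\zeta_{P(K)}(0,\chi)=\tfrac1q A_\chi(2n)$ after collecting terms. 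I expect the bookkeeping in this change of variables — correctly tracking the ceiling functions, the shift by $q$, and the range $0\le C,D\le q-1$ — to be the main obstacle, exactly as it is the technical heart of \cite{Biro1}.

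For the second assertion, assume $(q,d)=1$ and $h(D)=1$. Under $h=1$ every integral ideal is principal, so $\zeta_K(s,\chi\circ N)=\zeta_{P(K)}(s,\chi)$, and the Dedekind zeta of $K$ twisted by $\chi\circ N$ factors as $\zeta_K(s,\chi\circ N)=L(s,\chi)L(s,\chi\chi_d)$, where $\chi_d$ is the Kronecker character of $K$; evaluating at $s=0$ and using the classical formulas $L(0,\psi)=-\tfrac1q\sum_{a}a\psi(a)$ for an odd primitive character (and the analogous rational expression for the even case, which contributes the regulator $\log\xi$ factor that one shows is a unit here) gives that $\tfrac1q A_\chi(2n)$ equals, up to an explicit algebraic unit, a product involving $m_\chi=\sum_a a\chi(a)$. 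Since $L(0,\chi)=-m_\chi/q$ sits in the denominator after clearing, $A_\chi(2n)m_\chi^{-1}$ — more precisely $2A_\chi(2n)m_\chi^{-1}$, the factor $2$ absorbing the $\gcd(q,d)=1$ normalization and the class-number-one index — comes out to be an algebraic integer in $\mathbb{Q}(\chi)$, and in particular $m_\chi\neq 0$ since otherwise $L(0,\chi)=0$, contradicting the nonvanishing of the $L$-function of an odd character at $0$ (equivalently, $\sum a\chi(a)\neq 0$ for odd $\chi$). I would close by noting that the appearance of the factor $2$ here is the promised reflection of the "factor $2$ in the continued fraction" mentioned in the introduction, and that all the algebraic-integrality bookkeeping reduces to the statement that the analytic class number formula for $K$ has the shape (unit)$\times L(0,\chi)\times$(algebraic integer) when $h(D)=1$.
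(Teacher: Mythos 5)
Your overall strategy coincides with the paper's: decompose $\zeta_{P(K)}(s,\chi)$ over the Shintani cone spanned by $1$ and $\xi^2$ (using $N(\xi)=-1$ and a fundamental domain $1\le\beta/\bar\beta<\xi^4$ for generators of principal ideals), evaluate at $s=0$ via Bernoulli polynomials, and for the second assertion use $\zeta_K(s,\chi)=L(s,\chi)L(s,\chi\chi_d)$ together with $L(0,\chi)=-m_\chi/q\neq0$. However, two steps that you defer to ``bookkeeping'' are where the actual content lies, and as written the sketch does not close. Shintani's formula at $s=0$ produces not only the $B_1(x)B_1(y)$ term that eventually yields $\lceil(2nC-D)/q\rceil\,(C-q)$, but also $B_2$-terms weighted by $\xi^2+\xi^{-2}=4n^2+2$; after the change of basis these contribute quadratic and linear expressions in $C,D$ that do not vanish termwise. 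The paper eliminates them with the permutation $T(C,D)=\bigl(D-2nC-q\lfloor(D-2nC)/q\rfloor,\,C\bigr)$, under which $Q(C,D)\equiv-Q(\widehat C,\widehat D)\pmod q$, so that the oddness of $\chi$ forces the weighted sum of the $T$-symmetric pieces to cancel in pairs. ``Collecting terms'' without this sign-reversing permutation does not produce $\zeta_{P(K)}(0,\chi)=\tfrac1qA_\chi(2n)$; this cancellation is the one genuinely non-mechanical idea in the first half and it is absent from your plan.

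In the second half, your parenthetical about ``the even case, which contributes the regulator $\log\xi$ factor'' is off target: since $\chi$ is odd and $\chi_d(-1)=1$ (because $d>0$), the character $\chi\chi_d$ is again odd and primitive of conductor $qd$, so no regulator appears and $L(0,\chi\chi_d)=-\frac{1}{qd}\sum_{b}b\chi(b)\chi_d(b)$; had an even character genuinely entered, its $L$-value at $0$ would vanish and the argument would collapse. More importantly, the integrality of $2A_\chi(2n)m_\chi^{-1}=\frac{2}{qd}\sum_{b=1}^{qd}b\chi(b)\chi_d(b)$ is not a formal consequence of the class number formula having a convenient ``shape'': one must actually prove the two congruences $\sum_{x}(l+xq)\chi_d(l+xq)\equiv0\pmod q$ (from the nontriviality of $\chi_d$) and $2\sum_{y}y\chi_d(y)\equiv0\pmod d$ (from $\chi_d(-1)=1$, which is precisely where the factor $2$ is needed --- it has nothing to do with ``absorbing the class-number-one index''). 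These are short arguments, but they are the proof of the second assertion; your sketch asserts the conclusion without them.
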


\begin{rmk}
It is worth remarking that, differently from \cite{Biro1}, a factor 2 appears in front of $A_\chi$, but it does not cause any problem, indeed later we will work with some prime ideals $I$ over an odd integer prime $r$ containing $2A_\chi(2n)$, so we will have $A_\chi(2s)\in I$ which is a crucial condition in the proof of Theorem \ref{thm 2}.
\end{rmk}

To prove Lemma \ref{lemma fatto A} we need to extend the holomorphic function $\zeta_{P(K)}(s,\chi)$, originally defined for $\frak{Re}(s)>1$, to the whole complex plane and then to evaluate it at $s=0$. This will be done by writing the series defining $\zeta_{P(K)}(s,\chi)$ in a finite sum of series involving the Shintani zeta function, allowing us to apply \cite[Proposition (Shintani)]{Biro1}: this does not just give the meromorphic continuation, but also its value at $s=0$ expressed in term of Bernoulli polynomials.

\begin{defn}[Shintani zeta function]\label{Shintani Z}
Given a matrix $\begin{pmatrix}a&b\\c&d\end{pmatrix}$ with positive entries and $x>0,y\geq0$, the Shintani zeta function is defined as
\begin{equation}
\zeta\left(s,\begin{pmatrix}a&b\\c&d\end{pmatrix},x,y\right):=\sum_{n_1,n_2=0}^{\infty}(a(n_1+x)+b(n_2+y))^{-s}(c(n_1+x)+d(n_2+y))^{-s}.
\end{equation}
\end{defn}

\begin{proof}[Proof of Lemma \ref{lemma fatto A}]
We first want to describe in a suitable way the norm of a principal ideal by choosing a special generator.
Since $\xi\bar\xi=-1$, $\bar\xi<0$ and every principal ideal can be generated by a unique $\beta\in \mathcal{O}_K$ satisfying
\begin{equation}\label{ineq beta}
\beta>0,\quad \bar\beta>0,\quad 1\leq\beta/\bar\beta< \xi^4.
\end{equation}
As in \cite[Proof of Lemma 1]{Biro1}, $\beta$ can be written uniquely as $\beta=X+Y\xi^2$ with $X>0,Y\geq0$ in $\mathbb{Q}$ due to the fact that $(1,\xi^2)$ is a basis of $\mathbb{Q}(\sqrt{D})/\mathbb{Q}$ and to the inequalities in \eqref{ineq beta}; since $q$ is a positive integer, $X,Y$ can be in turn written uniquely as $X=qx+qn$ e $Y=qy+qm$ for non-negative integers $n,m$ such that $0<x\leq1,0\leq y<1$ (just perform the euclidean division).
At this point 
$$\beta\in \mathcal{O}_K \iff q(x+y\xi^2)\in \mathcal{O}_K.$$
Since $(1,\xi)$ is an integral basis, $C+D\xi$ with $0\leq C,D\leq q-1$ realizes a complete system of representatives for $\mathcal{O}_K/(q)$ and there exist unique $C,D$ such that
$$
qx+qy\xi^2\in C+D\xi+q\mathcal{O}_K,
$$
from which
$$x+y\xi^2-\frac{C+D\xi}{q}\in \mathcal{O}_K.$$
If we define 
\begin{equation}\label{def Q}
Q(C,D):=D^2-C^2-2nCD,
\end{equation}
and $a$ is the ideal generated by $X+Y\xi^2$, one has that
$$N(a)\equiv (C+D\xi)(\overline{C+D\xi})\equiv -Q(C,D)\quad (q).$$
It follows that
\begin{align*}
\zeta_{P(K)}(s,\chi)=&\sum_{(\beta)\in \mathcal{O}_K}\frac{\chi(N(\beta))}{N(\beta)^s}\\
=&\sum_{0\leq C,D\leq q-1}\sum_{(x,y)\in R(C,D)}\sum_{n,m\in\mathbb{N}}\frac{\chi(-Q(C,D))}{(X+Y\xi^2)^s(X+Y{\bar\xi}^2)^s}\\
=&\frac{-1}{q^{2s}}\sum_{0\leq C,D\leq q-1}\chi(Q(C,D))\sum_{(x,y)\in R(C,D)}\zeta\left(s,\begin{pmatrix}1&\xi^2\\ 1& \xi^{-2}\end{pmatrix},x,y\right),
\end{align*}
where the function in the last summand is the Shintani zeta function (see Definition \ref{Shintani Z}) and 
$$
R(C,D):=\{(x,y)\in\mathbb{Q}^2|0<x\leq1,0\leq y<1,x+y\xi^2-\frac{C+D\xi}{q}\in \mathcal{O}_K\}.
$$ 
From this it follows that
\begin{equation}\label{eq somma pesata}
\zeta_{P(K)}(0,\chi)=-\sum_{0\leq C,D\leq q-1}\chi(Q(C,D))\Sigma_{C,D},
\end{equation}
where
$$
\Sigma_{C,D}=\sum_{(x,y)\in R(C,D)}\left(B_1(x)B_1(y)+\frac{1}{4}\left( B_2(x)(\xi^2+\xi^{-2})+B_2(y)(\xi^2+\xi^{-2}) \right)\right)
$$
with $B_1(z)=z-\frac{1}{2}$ and $B_2(z)=z^2-z+\frac{1}{6}$ the first two Bernoulli polynomials.
An easy computation shows that
\begin{align*}
&\Sigma_{C,D}\\
=&\sum_{(x,y)\in R(C,D)}\left(\frac{2n^2+1}{2} (x+y)^2-2n^2(xy)-(n^2+1)(x+y)+\frac{4n^2+5}{12}\right).
\end{align*}
To calculate this sum it is necessary to understand better how $x,y$ are made; we will also show (see conditions \eqref{cond somma finita Shintani}) that $|R(C,D)|<\infty$ for fixed $C,D$ allowing us to get the analytic continuation and the value at $s=0$ for $\zeta_{P(K)}(s,\chi)$ from  Shintani zeta function. This can be done by changing basis over $\mathbb{Q}$ from $(1,\xi^2)$ to $(1,\xi)$. 
For $a,b\in\mathbb{Q}$, since $\xi=n+\sqrt{n^2+1}$ and $\xi^2=2n\xi +1$, one can check that
$$
\frac{a+b\xi}{q}=\frac{2na-b}{2nq}+\frac{b}{2nq}\xi^2
$$
and therefore, if we impose
$$
(x,y)=\left( \frac{1}{q}\left(a-\frac{b}{2n}\right),\frac{b}{2nq} \right),
$$
the elements of $R(C,D)$ are defined by the conditions
\begin{equation}\label{cond somma finita Shintani}
0<a-\frac{b}{2n}\leq q,\quad 0\leq \frac{b}{2n} <q,\quad \frac{(a-C)+(b-D)\xi}{q}\in \mathcal{O}_K;
\end{equation}
as a consequence $|R(C,D)|<\infty$. This also implies $a-C= qi$ and $b-D= qj$ with $i,j\in\mathbb{Z}$ and 
$$0<C+qi-\frac{D+qj}{2n}\leq q,\quad 0\leq \frac{D+qj}{2n} <q,$$
from which
$$0\leq j \leq 2n-1, \quad \frac{D+qj}{2nq}-\frac{C}{q}<i\leq\frac{D+qj}{2nq}-\frac{C}{q}+1.$$
With our choices the parameter $i$ is uniquely determined.
Indeed, if we define $A:=\lceil\frac{2nC-D}{q}\rceil$, then
\begin{align*}
&\frac{-(2nC-D)+qj}{2nq}<i\leq\frac{-(2nC-D)+qj}{2nq}+1 \\
\Longrightarrow\ &i=\begin{cases}0 & 0\leq j < A \\ 1 & A\leq j < 2n\end{cases}\Longrightarrow\ a=\begin{cases}C & 0\leq j < A \\ C+q & A\leq j < 2n\end{cases}.
\end{align*}
Calling $a_j:=a$ and $b_j:=b$ we can rewrite $\Sigma_{C,D}$:
$$\Sigma_{C,D}=\sum_{j=0}^{2n-1}\left(\frac{2n^2+1}{2q^2}a_j^2-\frac{2n^2}{q^2}\left(a_j-\frac{b_j}{2n}\right)\frac{b_j}{2n}-\frac{n^2+1}{q}a_j+\frac{4n^2+5}{12}\right).$$
At this point, through a long calculation,
\begin{align}
\begin{split}\label{calcolo sigma}
\Sigma_{C,D}=&A\left(1-\frac{C}{q}\right)+n\frac{(qA-2Cn+D)^2+D^2+2C^2}{2q^2}\\
-&\frac{(n+1)(qA-2nC+D)+(n-1)D+2nC}{2q}.
\end{split}
\end{align}
Since the first term of the right-hand side of \eqref{calcolo sigma} is the desired one, we just need to show that the last two summands, when we compute the weighted sum against the character in \eqref{eq somma pesata}, produce zero. As in \cite{Biro1} we introduce the transformation 
\begin{equation}\label{T}
T(C,D)=\left(D-2nC-q\left\lfloor \frac{D-2nC}{q} \right\rfloor,C\right).
\end{equation}
We will denote also as $(\widehat{C},\widehat{D})$.
This is a permutation of the elements of the set of couples $(C,D)$ with $0\leq C,D<q$. 
Indeed $0\leq D-2nC-q\lfloor\frac{D-2nC}{q}\rfloor<q$ is an integer and the transformation is invertible.
The following relations are straightforward:
\begin{equation*}
\widehat{C}-D+2nC=qA,\quad \widehat{C}=\widehat{\widehat{D}}.
\end{equation*}
Then
\begin{align*}
\Sigma_{C,D}=&A\left(1-\frac{C}{q}\right)+n\frac{\widehat{\widehat{D}}^2+\widehat{D}^2+\widehat{D}^2+D^2}{2q^2}\\
-&\frac{(n+1)(\widehat{\widehat{D}}+\widehat{D})+(n-1)(\widehat{D}+D)}{2q}
\end{align*}
and
$$
Q(C,D)\equiv -Q(\widehat{C},\widehat{D})\pmod{q},
$$
see \eqref{def Q} for the definition of the quadratic form $Q$.
Since $\chi$ is odd and when we apply $T$ the quadratic form $Q$ changes sign modulo $q$, one has $\chi(Q(C,D))=-\chi(Q(\widehat{C},\widehat{D}))$. Therefore the orbits of $T$ over which $\chi(Q(\cdot,\cdot))$ is not $0$ have even cardinality. This proves the first part of Lemma \ref{lemma fatto A}.

Now we prove the second one. Let us recall some basic facts about zeta functions and $L$-functions.
It is well known that $\zeta_K(s)=\zeta(s)L(s,\chi_d)$
and, with the same reasoning, one can prove that

\begin{equation}\label{eq ZL}
\zeta_K(s,\chi)=L(s,\chi)L(s,\chi\chi_d).
\end{equation}
Another classical fact we need (see \cite[chapter 9]{Davenport}) is that, for an odd primitive character $\chi$ of conductor $q$ one has

\begin{equation}\label{eq mL}
L(0,\chi)=-\frac{1}{q}\sum_{a=1}^q a\chi(a)\neq 0.
\end{equation}
If $(d,q)=1$ then $\chi\chi_d$ is primitive of conductor $qd$ and $\chi_d(-1)=1$ because $d>0$. Thus $m_\chi\neq 0$ by \eqref{eq mL} and 

\begin{equation}\label{eq ZL mL}
\zeta_K(0,\chi)=\frac{1}{q^2d}\cdot\sum_{a=1}^q a\chi(a)\cdot\sum_{b=1}^{qd} b\chi(b)\chi_d(b) =\frac{m_\chi}{q^2d}\cdot\sum_{b=1}^{qd} b\chi(b)\chi_d(b)
\end{equation}
by \eqref{eq ZL} and \eqref{eq mL}.

If we assume the hypotheses of the last statement of Lemma \ref{lemma fatto A}, precisely $h(D)=1$ and $(q,d)=1$, by \eqref{eq ZL mL} we get
$$
2A_\chi(2n)m_\chi^{-1}=\frac{2}{qd}\sum_{b=1}^{qd}b\chi(b)\chi_d(b)=\sum_{l=1}^q \chi(l)\sum_{x=0}^{d-1}\frac{2(l+xq)\chi_d(l+xq)}{qd}.
$$
To conclude it is sufficient to show that the last sum on $x$ is an integer for every $l$.
If we fix $l$ and vary $x$, then $l+xq$ spans all the classes modulo $d$ and $\chi_d$ is non trivial, thus
$$
\sum_{x=0}^{d-1}(l+xq)\chi_d(l+xq)\equiv l\sum_{x=0}^{d-1}\chi_d(l+xq)\equiv 0 \pmod{q}.
$$
For the divisibility by $d$, if we change variable setting $y=l+xq$, we see that
$$
2\sum_{x=0}^{d-1}(l+xq)\chi_d(l+xq)=2\sum_{y=0}^{d-1}y\chi_d(y)=\sum_{y=0}^d y\chi_d(y)+\sum_{y=0}^d (d-y)\chi_d(y)
$$
because $\chi_d(-1)=1$.
Thus
$$
2\sum_{x=0}^{d-1}(l+xq)\chi_d(l+xq)\equiv  0 \pmod{d}
$$
This complete the proof of Lemma \ref{lemma fatto A}.
\end{proof}

\begin{lem}\label{lemma 2.2}
If $0\neq\beta\in \mathcal{O}_K$ and $|\beta\bar\beta|<2n$ then $\beta$ is associated to an integer.
\end{lem}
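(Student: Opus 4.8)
The plan is to multiply $\beta$ by a suitable unit so that $\beta$ and its conjugate both become $O(n)$, and then to finish by inspecting the few integer points that can occur.

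First I would observe that $N(\beta)=\beta\bar\beta\neq0$, since $D=n^{2}+1$ is not a square, and set $M:=|N(\beta)|<2n$. Multiplying $\beta$ by a unit changes neither $M$ nor the conclusion to be proved, so after a sign change I may assume $\beta>0$. Since $\overline{\xi^{k}\beta}=\bar\xi^{k}\bar\beta$ and $|\bar\xi|=\xi^{-1}$, one has $\xi^{k}\beta/|\overline{\xi^{k}\beta}|=\xi^{2k}\cdot\beta/|\bar\beta|$; as $\xi>1$, the half-open interval $[\xi^{-1},\xi)$, of multiplicative length $\xi^{2}$, is a fundamental domain for the scaling $r\mapsto\xi^{2}r$, so a unique $k$ puts this ratio into $[\xi^{-1},\xi)$. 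Renaming, I assume henceforth $\beta>0$ and $\xi^{-1}\le\beta/|\bar\beta|<\xi$.

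Next I would extract the bounds $\beta^{2}=M\cdot(\beta/|\bar\beta|)<2n\xi$ and $|\bar\beta|^{2}=M/(\beta/|\bar\beta|)\le M\xi<2n\xi$. Since $2n\xi=2n^{2}+2n\sqrt{n^{2}+1}<(2n+1)^{2}$ — equivalently $2n^{2}+4n+1>2n\sqrt{n^{2}+1}$, which follows by squaring — this gives $0<\beta<2n+1$ and $0<|\bar\beta|<2n+1$. Writing $\beta=X+Y\sqrt D$ with $X,Y\in\mathbb{Z}$ (legitimate because $\mathcal{O}_{K}=\mathbb{Z}[\sqrt D]$), so $\beta-\bar\beta=2Y\sqrt D$, I would split on the sign of $N(\beta)$. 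If $N(\beta)>0$ then $\bar\beta>0$, hence $2|Y|\sqrt D=|\beta-\bar\beta|<2n+1$, and $\sqrt D>n$ forces $|Y|\le1$; moreover $|Y|=1$ is impossible, for then $N(\beta)=X^{2}-D>0$ forces $|X|\ge n+1$ and so $|N(\beta)|\ge(n+1)^{2}-(n^{2}+1)=2n>M$. Hence $Y=0$, $\beta\in\mathbb{Z}$, and the original element is associated to a rational integer. If $N(\beta)<0$ then $\beta>0>\bar\beta$, so $2Y\sqrt D=\beta+|\bar\beta|$ and $2X=\beta-|\bar\beta|$ force $Y\in\{1,2\}$ and $|X|\le n$; the case $Y=2$ is excluded since then $|X|\le n$ gives $|N(\beta)|=4D-X^{2}\ge 3n^{2}+4>2n$, and the case $Y=1$ forces $|X|=n$, i.e.\ $\beta\in\{\xi,\xi^{-1}\}$, which violates $\xi^{-1}\le\beta/|\bar\beta|<\xi$ because $\xi/|\bar\xi|=\xi^{2}$ and $\xi^{-1}/|\overline{\xi^{-1}}|=\xi^{-2}$. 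Thus a negative norm cannot survive the normalization, completing the argument.

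The delicate point is the normalization: because $\xi$ has norm $-1$, multiplying by $\xi$ scales $\beta/|\bar\beta|$ by $\xi^{2}$, so $[\xi^{-1},\xi)$ — rather than the coarser domain $[1,\xi^{4})$ attached to the ``special generator'' used in Lemma \ref{lemma fatto A} — is the right fundamental domain, and it is exactly this tightness that makes $\beta,|\bar\beta|<2n+1$, and hence the final count, possible. I would also take care, in the negative-norm case, that the surviving candidates $\beta=\pm n+\sqrt D$ are killed by the normalization rather than merely by the norm bound. Everything else is a routine verification.
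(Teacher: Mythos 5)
Your argument is correct: the normalization $\xi^{-1}\le\beta/|\bar\beta|<\xi$ is legitimate (multiplication by $\xi^{k}$ scales the ratio by $\xi^{2k}$, and the interval has multiplicative length exactly $\xi^{2}$), the bound $2n\xi<(2n+1)^{2}$ holds, and the enumeration of $(X,Y)$ in both sign cases is complete, including the disposal of $\xi^{\pm1}$. The underlying strategy is the same as the paper's — reduce $\beta$ modulo the unit group to a fundamental domain and then count the few lattice points compatible with $|\beta\bar\beta|<2n$ — but the bookkeeping differs in a way worth noting. The paper normalizes $|\beta|$ itself into $[\xi^{-1},1]$ and expands $\beta=e\xi^{-1}-f$ in the basis $(\xi^{-1},1)$, so that ``associated to an integer'' is literally the vanishing of one coordinate and the norm bound is invoked only once, to kill $f=\pm1$. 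You normalize the ratio of the two embeddings, which buys the symmetric box $0<\beta,|\bar\beta|<2n+1$, and work in the basis $(1,\sqrt{D})$, at the price of a case split on the sign of the norm and of having to eliminate the residual unit candidates $\beta=\pm n+\sqrt{D}$; for those you could also have observed directly that a unit is associated to $1$, so the conclusion of the lemma holds anyway, rather than appealing to the normalization. Both routes are elementary and of comparable length; yours makes the geometry of the two embeddings explicit, the paper's makes the conclusion visible in the coordinates.
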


\begin{proof}Let us write $\beta=e\xi^{-1}-f$ with $e,f\in\mathbb{Z}$. We can think $\xi^{-1}\leq |\beta|\leq 1$ and $e>0$.
$$|\bar\beta|=\left| -e\xi-f \right|=\left| e\xi+f \right|=\left| e(\xi+\xi^{-1})-\beta \right|\geq e(\xi+\xi^{-1})-1.$$
This implies
$$e- \xi^{-1} < e-\xi^{-1}+e\xi^{-2}\leq \xi^{-1}|\bar\beta| \leq|\beta\bar\beta|<2n,$$
where we also used $\xi^{-1}\leq |\beta|\leq 1$ and $\beta\bar\beta\in\mathbb{Z}$, providing a condition over $e$: $1\leq e \leq 2n-1.$
From
$e\xi^{-1}=e\cdot (-n+\sqrt{n^2+1})$
we deduce that $0<e\xi^{-1}<1$
and, thanks to the inequality $|\beta|\leq 1$, we deduce $d\in\{0,1\}$. If $d=1$ we have
$$
2n>|\beta\bar\beta|=|(e\xi^{-1}-1)(-e\xi-1)|=-e^2+2ne+1\geq 2n,
$$
which is a contradiction. Hence $d=0$ and $\beta$ is associated to an integer, as claimed.
\end{proof}

\begin{prop}\label{prop 2.3}
If $D=n^2+1$ with $n>1$ odd is squarefree and $h(D)=1$ then $D=2p$ where $p$ is an odd prime; if $r$ is an odd prime such that $r<2n$ then
$$\left(\frac{D}{r}\right)=-1.$$
\end{prop}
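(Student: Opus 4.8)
The plan is to use Lemma \ref{lemma 2.2} to force every odd prime $r<2n$ to be inert in $K$, and then to read off the factorization of $D$ from the resulting absence of small ramified primes.

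First I would record the elementary facts. Since $n$ is odd, $n^2\equiv 1\pmod 8$, so $D=n^2+1\equiv 2\pmod 4$; hence $\mathcal O_K=\mathbb Z[\sqrt D]$, $d=4D$, and $D=2m$ with $m$ odd and squarefree, while $m=(n^2+1)/2\ge 5$ because $n>1$. Moreover, for an odd prime $r$ one has $\chi_d(r)=\left(\frac{4D}{r}\right)=\left(\frac{D}{r}\right)$, so the splitting type of $r$ in $K$ is governed by $\left(\frac{D}{r}\right)$; in particular an odd prime ramifies in $K$ precisely when it divides $D$.

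Next I would establish the inertness statement. Let $r<2n$ be an odd prime with $\left(\frac{D}{r}\right)\neq-1$, i.e. $r$ is not inert. In either remaining case — $r$ split, or $r$ ramified (so $r\mid D$) — there is a prime ideal $\mathfrak r\subset\mathcal O_K$ with $N(\mathfrak r)=r$, and since $h(D)=1$ it is principal: $\mathfrak r=(\beta)$ with $0\neq\beta\in\mathcal O_K$ and $|\beta\bar\beta|=r<2n$. By Lemma \ref{lemma 2.2}, $\beta$ is associated to a rational integer $c\neq0$; then $r=|\beta\bar\beta|=c^2$ is a perfect square, which is impossible for a prime. Hence $\left(\frac{D}{r}\right)=-1$ for every odd prime $r<2n$, and in particular no odd prime below $2n$ divides $D$.

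Finally I would deduce the factorization. If $m=D/2$ were composite, its least prime factor $r_1$ would be odd and satisfy $r_1\le\sqrt m=\sqrt{(n^2+1)/2}<n<2n$ (the inequality $(n^2+1)/2<n^2$ holding since $n>1$), so $r_1$ would be an odd prime $<2n$ dividing $D$, contradicting the previous paragraph. Thus $m$ is prime, and being odd with $m\ge5$ it yields $D=2p$ with $p=m$ an odd prime. I expect the only delicate point to be the third step: one must check that an ideal of norm exactly $r$ is available both when $r$ splits and when $r$ ramifies, and that $|\beta\bar\beta|$ is a perfect square whenever $\beta$ is associated to a rational integer; once this is in place, Lemma \ref{lemma 2.2} does all the work and the remainder is elementary arithmetic.
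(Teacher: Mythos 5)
Your proposal is correct and follows essentially the same route as the paper: in both arguments the whole content is that an odd prime $r<2n$ which splits or ramifies would give a principal prime ideal of norm $r$, whence by Lemma \ref{lemma 2.2} a generator associated to a rational integer and the absurdity that $r$ is a perfect square. The only (immaterial) difference is the order of the two steps: the paper first rules out a small prime divisor of $D$ to get $D=2p$ and then treats general $r<2n$, whereas you prove the inertness statement first and read off $D=2p$ as a consequence.
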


\begin{proof}
Let $p$ be the smallest odd prime divisor of $D$ and let us write $D=2pa$ with $a\geq p$. Then $2p^2\leq n^2+1$ and so $2<p<n.$
Because of $p|D$, in the ring of integers $(p)=(\beta)^2$ for $\beta\in \mathcal{O}_K$.
Since $|\beta\bar\beta|=p<2n$, by Lemma \ref{lemma 2.2} we deduce that $p$ is a square. This is absurd, so $D=2p$ with $p$ prime. Let $r$ be as in the statement; then $\left(\frac{D}{r}\right)\neq -1$ means that $(r)$ factors into two prime ideals and thus $|\beta\bar\beta|=r<2n,$
then $r$ is a square by Lemma \ref{lemma 2.2}.
It is absurd.
\end{proof}

As in \cite{Biro1}, if $m$ is a positive odd integer we call $U_m$ the set of the integers $a$ such that 
$$\left(\frac{a^2+1}{r}\right)=-1$$
for every prime divisor $r$ of $m$.
Let $\mathbb{Q}(\chi)$ be the field generated over $\mathbb{Q}$ by the values $\chi(a)$ and let $I$ be a prime ideal of $\mathbb{Q}(\chi)$ containing $m_\chi$, the algebraic integer defined in \eqref{def m}. 
Moreover if we write
$n=Pq+n_0$ with $0\leq n_0<q$ and make the substitution inside $A_\chi$
it follows that
\begin{align*}
A_\chi(2n)=A_\chi(2n_0)+2PB_\chi(2n_0),
\end{align*}
see \eqref{def B} for the definition of $B_\chi(n)$.
From \cite{Biro1} we recall the following condition:
the integer $q$ is odd, $r$ is an odd prime and there is an odd primitive character $\chi$ of conductor $q$ and a prime ideal $I$ of $\mathbb{Q}(\chi)$ lying above $r$ such that $m_\chi\in I$, but $I$ does not divide the ideal generated by $B_\chi(a)$ in the ring of integers of $\mathbb{Q}(\chi)$, if $a$ is any rational integer with $a\in U_q$.
\newline
We will be referring to it as ``condition $(*)$" or simply ``$(*)$".
If $(*)$ holds and $n_0\in U_q$, then by Lemma \ref{lemma fatto A}
$$
P\equiv - \frac{A_\chi(2n_0)}{2B_\chi(2n_0)}\pmod{I}
$$
and, consequently,
$$
n\equiv n_0- q\frac{A_\chi(2n_0)}{2B_\chi(2n_0)}\pmod{I}.
$$
As explained just before Remark \ref{rmk 2.1} the choices of $q$, $n_0$ and of the character $\chi$ and the validity of the condition $(*)$ will force $n$ to stay in a precise residue class modulo $r$. 
As we will show in appendix \ref{app A}, the choices of characters and prime ideals made by Biró in \cite{Biro1} for Yokoi's family still work for the family $n^2+1$.
Thus for each of the four characters of \cite[§4]{Biro1} we calculate the values of $A_\chi(2n_0)$ and $B_\chi(2n_0)$ modulo $I$ for every residue class $n_0$ modulo $q$. We then compute the residue class of $n_0$ modulo $r$ and delete those classes such that $n_0^2+1$ is a square modulo $r$ (see condition $(*)$). Repeating this process we are able to prove Theorem \ref{thm 2}.
In appendix \ref{app B}, for the sake of completeness, we exhibit an example of the code used for the calculations.

As a conclusion of this section, we add that Theorem \ref{thm 2} and Proposition \ref{prop 2.3} imply Corollary \ref{cor 1 thm 2} and Corollary \ref{cor 2 thm 2}.

% SEZIONE MCLAUGHLIN-ZIMMER
\section{Proof of Theorem \ref{thm 1}} \label{proof thm 1}

Throughout this section we assume that $q|b$, $D$ is as in \eqref{D McZim} squarefree with $b+s$ odd. The latter condition is equivalent to $D\equiv 2$ modulo $4$. 
If we call $\tau:=(4bs+1)$, by \cite[Theorem 6]{McZimmer} we get
\begin{equation}\label{expansion 1}
\sqrt{D}=[b\tau^k+s,\overline{2b,2b\tau^{k-1},2b\tau,2b\tau^{k-2}, \ldots, 2b\tau^{k-1},2b,2b\tau^k+2s}].
\end{equation} 
In order to have information about $\zeta_{P(K)}$ we apply \cite[Theorem 1]{BiroGranville}.
If $\sqrt{D}=[a_0,\overline{a_1,\ldots,a_l}]$
let us set $\omega=\sqrt{D}$, $\alpha=\omega-a_0$, $\frac{p_j}{q_j}=[0,a_1,\ldots,a_j]$, $\alpha_j=p_j-q_j\alpha$ and $\alpha_0=-\alpha$.
By \eqref{expansion 1} it follows that 
$$
p_1=1,\quad q_1=2b,\quad p_2=2b\tau^{k-1},\quad q_2=4b^2\tau^{k-1}+1
$$
and the coefficients $p_j$ and $q_j$ are alternatively equivalent to $0$ or $1$ modulo $q$. Indeed we have
\begin{align*}
&p_1\equiv p_3\equiv\ldots\equiv p_{2k-1}\equiv q_2\equiv q_4\equiv\ldots\equiv q_{2k}\equiv 1\pmod{q},\\
&p_2\equiv p_4\equiv\ldots\equiv p_{2k}\equiv  q_1\equiv q_3\equiv\ldots\equiv q_{2k-1}\equiv 0\pmod{q},
\end{align*}
with exception of the last convergent for which we have
$$
p_{2k+1}\equiv 1\pmod{q},\quad q_{2k+1}\equiv 2s\pmod{q}.
$$

As in \cite{Biro2} we introduce the quadratic forms
$$Q_j(x,y)=(\alpha_{j-1}x+\alpha_jy)(\bar\alpha_{j-1}x+\bar\alpha_jy)$$
and, if we write
$$Q_j(x,y)=A_jx^2+B_jxy+C_jy^2,$$
we get
\begin{align*}
& A_1=\alpha\bar\alpha,\qquad B_1=2q_1\alpha\bar\alpha-p_1(\alpha+\bar\alpha),\\
&A_j=p_{j-1}^2-p_{j-1}q_{j-1}(\alpha+\bar\alpha)+q_{j-1}^2\alpha\bar\alpha \qquad(j>1),\\
&B_j=2p_{j-1}p_j+2q_{j-1}q_j\alpha\bar\alpha-(p_{j-1}q_j+p_jq_{j-1})(\alpha+\bar\alpha)\qquad (j>1),\\
&C_j=p_j^2-p_jq_j(\alpha+\bar\alpha)+q_j^2\alpha\bar\alpha\qquad (j\geq 1).
\end{align*}
From the congruences $-(\alpha+\bar\alpha)\equiv 2s, \alpha\bar\alpha\equiv -1$ modulo $q$,
we derive the next congruences modulo $q$
$$A_j\equiv(-1)^j,\ B_j\equiv2s\ (j\leq 2k), B_{2k+1}\equiv-2s,\ C_j\equiv(-1)^{j-1}.$$

\begin{prop}\label{prop 4.1}
Call by $d$ the discriminant of $\mathbb{Q}(\sqrt{D})$. If $(q,d)=1$ and $\chi$ is an odd character of conductor $q$, then $\zeta_{P(K)}$ extends meromorphically to the whole complex plane and
\begin{align*}
\zeta_{P(K)}(0,\chi)=&\frac{2}{q^2}\sum_{1\leq u,v\leq q-1}uv\chi(u^2+2suv-v^2)+\\
+&\left(\frac{d}{q}\right)\frac{\chi(-d)\tau(\chi)^2L(2,\overline\chi^2)}{\pi^2}\left(2b\tau^k+2s+\frac{\tau^k-1}{s}\right).
\end{align*}
\end{prop}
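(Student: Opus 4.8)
The plan is to apply \cite[Theorem 1]{BiroGranville} to the expansion \eqref{expansion 1} and then to simplify its output using the congruences recorded just above the statement. Specialised to $\omega=\sqrt D$ (so the period length is $l=2k+1$) and to an odd character $\chi$ of conductor $q$ with $(q,d)=1$, that theorem supplies the meromorphic continuation of $\zeta_{P(K)}(s,\chi)$ to $\mathbb C$ together with a closed formula for $\zeta_{P(K)}(0,\chi)$ consisting of two pieces: a principal piece, a double character sum in which $\chi$ is evaluated on the binary forms $Q_j(x,y)=A_jx^2+B_jxy+C_jy^2$ attached to the convergents $p_j/q_j$, each term weighted by a linear (Bernoulli-type) expression in the lattice variables; and a secondary piece, equal to $\left(\frac{d}{q}\right)\chi(-d)\tau(\chi)^2L(2,\overline\chi^2)\pi^{-2}$ times a rational coefficient which, summed over the period, equals $\sum_{j=1}^{l}a_j$. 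The quadratic Gauss symbol $\left(\frac{d}{q}\right)$ and the twist $\chi(-d)$ emerge when the $j$-dependent character values are rewritten through $\chi^2$ via the factorization $\zeta_K(s,\chi)=L(s,\chi)L(s,\chi\chi_d)$ of \eqref{eq ZL} and the functional equation, under which $\tau(\chi)^2L(2,\overline\chi^2)\pi^{-2}$ becomes an algebraic (indeed $\mathbb Q(\chi)$-rational) number.

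Next I would collapse the principal piece into the first term of the statement. Since $\chi$ has conductor $q$, the value $\chi(Q_j(u,v))$ depends only on $A_j,B_j,C_j$ and $u,v$ modulo $q$; by $A_j\equiv(-1)^j$, $B_j\equiv 2s$ for $j\le 2k$, $B_{2k+1}\equiv-2s$, $C_j\equiv(-1)^{j-1}\pmod q$, each $Q_j$ is, up to an overall sign, congruent modulo $q$ to $u^2+2suv-v^2$ after a substitution $v\mapsto\pm v$. As $\chi$ is odd, the sign coming from $\chi(-\,\cdot\,)$ is cancelled by the sign the odd linear weight picks up under $v\mapsto-v$, and the anomalous index $j=2k+1$, where $B_{2k+1}\equiv-2s$, is absorbed by the same device. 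The large cancellation that then occurs — entirely analogous to the vanishing of the last two summands of $\Sigma_{C,D}$ under the transformation $T$ in the proof of Lemma \ref{lemma fatto A} — leaves exactly $\frac{2}{q^2}\sum_{1\le u,v\le q-1}uv\,\chi(u^2+2suv-v^2)$; the surviving factor $2$ is the same artifact of working with $\sqrt D$ instead of $\frac{1+\sqrt D}{2}$ that produced the argument $2n$ (rather than $n$) inside $A_\chi$ in Lemma \ref{lemma fatto A}. This part runs parallel to \cite[\S3]{Biro2}.

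It then remains to evaluate $\sum_{j=1}^{l}a_j$ in the secondary piece. From \eqref{expansion 1} with $\tau=4bs+1$, the last partial quotient is $2b\tau^k+2s$, whereas the interior partial quotients run twice through $\{2b\tau^i:0\le i\le k-1\}$; by the geometric series $\sum_{i=0}^{k-1}\tau^i=\frac{\tau^k-1}{\tau-1}=\frac{\tau^k-1}{4bs}$ they sum to $2\cdot 2b\cdot\frac{\tau^k-1}{4bs}=\frac{\tau^k-1}{s}$. Hence $\sum_{j=1}^{l}a_j=2b\tau^k+2s+\frac{\tau^k-1}{s}$, which is the coefficient in the statement, and assembling the two pieces finishes the proof.

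The main obstacle is the bookkeeping inside \cite[Theorem 1]{BiroGranville}: one must pin down exactly which quantity attached to the $j$-th convergent carries the $L(2,\overline\chi^2)$-coefficient (so that the sum over the period genuinely becomes $\sum_j a_j$ and not, say, a sum involving the $q_j$), and one must verify that the three independent sources of signs — the sign of $Q_j$ inside $\chi$, the sign the linear weight picks up under $v\mapsto-v$, and the sign reversal at $j=2k+1$ forced by $B_{2k+1}\equiv-2s$ — all align so that the surviving principal terms reinforce into a single $+\frac{2}{q^2}$ out front rather than partially cancelling. Once these are settled, the rest (the geometric series above and the reduction of the double character sum modulo $q$) is routine.
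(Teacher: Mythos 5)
Your proposal follows essentially the same route as the paper: apply \cite[Theorem 1]{BiroGranville} to the expansion \eqref{expansion 1}, reduce the forms $(-1)^jQ_j$ modulo $q$ via the congruences $A_j\equiv(-1)^j$, $B_j\equiv 2s$ ($j\le 2k$), $B_{2k+1}\equiv-2s$, $C_j\equiv(-1)^{j-1}$, and evaluate $\sum_j a_j$ by the geometric series $\sum_{i=0}^{k-1}\tau^i=\frac{\tau^k-1}{4bs}$; your computation of the secondary coefficient $2b\tau^k+2s+\frac{\tau^k-1}{s}$ is exactly the paper's (note only that the theorem's secondary term carries a factor $\overline\chi((-1)^jA_j)$, which equals $1$ by the congruence on $A_j$).

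The one step you state imprecisely is the collapse of the principal piece. After reduction the $j$-th term is $\frac{2}{q^2}\sum_{1\le u,v\le q-1}uv\,\chi(u^2+(-1)^j2suv-v^2)$ for $j\le 2k$, and $u^2+2suv-v^2$ for $j=2k+1$; what happens is not that all $2k+1$ terms ``reinforce,'' but that the first $2k$ cancel in consecutive pairs and only the $j=2k+1$ term survives. Your proposed mechanism --- $v\mapsto -v$ with the weight $uv$ ``picking up a sign'' --- does not literally work on the summation range: under $v\mapsto q-v$ the weight becomes $u(q-v)=uq-uv$, so you would be left with an extra term $q\sum_{u,v}u\,\chi(u^2+2suv-v^2)$ whose vanishing needs a separate (true but unproved) argument. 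The clean device, the one implicitly used in the paper and spelled out in the analogous step of Lemma \ref{lemma fatto A} and Lemma \ref{lemma 4.3.}, is the swap $(u,v)\mapsto(v,u)$: it fixes the weight $uv$ and sends $u^2-2suv-v^2$ to $-(u^2+2suv-v^2)$, so oddness of $\chi$ gives $\sum uv\,\chi(u^2-2suv-v^2)=-\sum uv\,\chi(u^2+2suv-v^2)$ and the interior terms cancel pairwise. With that substitution in place of yours, the argument is complete and coincides with the paper's proof.
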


\begin{proof}
The thesis follows from \cite[Theorem 1]{BiroGranville} indeed, if $A_j,B_j,C_j$ are as above,
\begin{align*}
\zeta_{P(K)}(0,\chi)=&\frac{2}{q^2}\sum_{j=1}^l\sum_{1\leq u,v\leq q-1}uv\chi((-1)^j(A_ju^2+B_juv+C_jv^2))\\
+&\left(\frac{d}{q}\right)\frac{\chi(-d)\tau(\chi)^2L(2,\overline\chi^2)}{\pi^2}\sum_{j=1}^l a_j\overline\chi((-1)^jA_j).
\end{align*}
Now using the congruences obtained just before Proposition \ref{prop 4.1} we get
\begin{align*}
&\zeta_{P(K)}(0,\chi)\\
=&\frac{2}{q^2}\sum_{j=1}^{2k}\sum_{1\leq u,v\leq q-1}uv\chi(u^2+(-1)^j2suv-v^2)\\
+&\frac{2}{q^2}\sum_{1\leq u,v\leq q-1}uv\chi(u^2+2suv-v^2)+\left(\frac{d}{q}\right)\frac{\chi(-d)\tau(\chi)^2L(2,\overline\chi^2)}{\pi^2}\sum_{j=1}^l a_j\\
=&\frac{2}{q^2}\sum_{1\leq u,v\leq q-1}uv\chi(u^2+2suv-v^2)\\
+&\left(\frac{d}{q}\right)\frac{\chi(-d)\tau(\chi)^2L(2,\overline\chi^2)}{\pi^2}\left(2b\tau^k+2s+\frac{\tau^k-1}{s}\right).
\end{align*}
\end{proof}

A reasoning similar to that of Lemma 2.2. of \cite{Biro2} gives the
\begin{lem}\label{lemma 4.2.}
Under the same hypotheses of the previous proposition
$$\chi(-d)\left(\frac{d}{q}\right)\frac{\tau(\chi)^2L(2,\overline\chi^2)}{\pi^2}=\sum_{u,v}\left(\frac{v^2}{q^2}-\frac{v}{q}\right)\chi(u^2+2suv-v^2).$$
\end{lem}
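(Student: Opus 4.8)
The statement to prove is Lemma \ref{lemma 4.2.}, which expresses the "transcendental" part of $\zeta_{P(K)}(0,\chi)$ — the coefficient involving $\tau(\chi)^2 L(2,\overline\chi^2)/\pi^2$ — as a finite character sum. The plan is to compute the quantity
\[
S:=\sum_{1\leq u,v\leq q-1}\left(\frac{v^2}{q^2}-\frac{v}{q}\right)\chi(u^2+2suv-v^2)
\]
directly and show it equals $\left(\frac{d}{q}\right)\chi(-d)\tau(\chi)^2 L(2,\overline\chi^2)/\pi^2$. The key observation is that the binary form $u^2+2suv-v^2$ is, up to sign and modulo $q$, exactly the form $u^2+2suv-v^2\equiv -(v^2-2suv-u^2)$, and the substitution $u\mapsto u$, $v\mapsto$ (something linear) ties the $v$-weight to the normalization appearing in \cite[Theorem 1]{BiroGranville}. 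So first I would recall precisely the form of Biró–Granville's theorem and isolate the identity it yields for $\sum_j a_j\overline\chi((-1)^jA_j)$ in relation to a weighted character sum over $(u,v)$; this is the same mechanism Biró uses in \cite[Lemma 2.2]{Biro2}, and the task is to redo it with the extra factor $2$ and the $D\equiv 2\pmod 4$ normalization.

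**Main steps.** (1) Start from the exact formula of \cite[Theorem 1]{BiroGranville} as already reproduced in the proof of Proposition \ref{prop 4.1}, but now keep the $a_j$-weighted term \emph{symbolically} rather than summing $\sum a_j$. (2) Use the fact that the orbit of the transformation sending $(u,v)$ to a suitable $\mathrm{SL}_2(\mathbb{Z}/q)$-image (analogous to the map $T$ of \eqref{T} in the $n^2+1$ case) acts on the set $\{1\le u,v\le q-1\}$, and that under this action the weight $\frac{v^2}{q^2}-\frac{v}{q}$ transforms in a controlled way while $\chi(u^2+2suv-v^2)$ picks up a root of unity coming from $\overline\chi^2$ evaluated at the leading coefficient. (3) Separate the "diagonal" contribution, which produces the rational character sum $\frac{2}{q^2}\sum uv\,\chi(\cdots)$ appearing in Proposition \ref{prop 4.1}, from the "off-diagonal" part, which after resummation factors through $\tau(\chi)^2$ and $\sum_{n\ge 1} \overline\chi^2(n)/n^2 = L(2,\overline\chi^2)$; the factor $1/\pi^2$ enters via the standard Gauss-sum identity relating $\sum \chi(n)/n^2$ over residues to $\tau(\chi)^2 L(2,\overline\chi^2)/q^2$ together with $\sum_{n}1/n^2$ type manipulations. (4) Track the quadratic symbol $\left(\frac{d}{q}\right)$ and the twist $\chi(-d)$: these come from expressing $\chi_d$ restricted to residues mod $q$ via quadratic reciprocity, exactly as $\chi_d(-1)=1$ and $(q,d)=1$ are used in Lemma \ref{lemma fatto A}.

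**Expected obstacle.** The hard part is the bookkeeping in step (2)–(3): correctly identifying which $\mathrm{SL}_2$-action on the pairs $(u,v)$ makes the weight $\frac{v^2}{q^2}-\frac{v}{q}$ the "right" one so that the partial summation collapses the $j$-sum over the period into a single clean term, and simultaneously verifying that the root-of-unity factors assemble into $\tau(\chi)^2$ rather than some other Gauss-sum power. Getting the sign, the factor $2$ (the recurring "factor $2$" flagged in the introduction because $D\equiv 2\pmod 4$), and the placement of $\chi$ versus $\overline\chi$ consistent between this lemma, Proposition \ref{prop 4.1}, and \cite[Theorem 1]{BiroGranville} is where errors are most likely to creep in. A secondary subtlety is justifying the interchange of the (now convergent at $s=2$) $L$-series with the finite residue sum; this is routine absolute convergence but must be stated. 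Once the action and the normalization are pinned down, the identity is essentially forced, since both sides are the same special value $\zeta_{P(K)}(0,\chi)$ minus its rational part, so one can even shortcut by subtracting the identity of Proposition \ref{prop 4.1} from the raw Biró–Granville formula — I would present the argument in that streamlined form if the direct computation proves too unwieldy.
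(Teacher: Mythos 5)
The paper gives no proof of this lemma at all; it only remarks that the argument of \cite[Lemma 2.2]{Biro2} carries over. Your proposal, however, does not contain the idea that makes that argument work, and the concrete ``shortcut'' you offer at the end is circular. Proposition \ref{prop 4.1} \emph{is} the Biró--Granville formula with the congruences $A_j\equiv(-1)^j$, $B_j\equiv\pm2s$, $C_j\equiv(-1)^{j-1}\pmod q$ substituted in; in both the raw formula and the Proposition the constant $\chi(-d)\left(\frac{d}{q}\right)\tau(\chi)^2L(2,\overline\chi^2)/\pi^2$ occurs only as the coefficient multiplying the $a_j$-sum, so subtracting one identity from the other yields $0=0$ and cannot isolate that coefficient. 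Nor is the right-hand side of the Lemma ``$\zeta_{P(K)}(0,\chi)$ minus its rational part'': it is an independent finite-sum evaluation of the archimedean constant, and producing that evaluation is precisely the content to be proved.

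The actual mechanism is a two-step factorization of the double sum, with no $\mathrm{SL}_2$-orbit argument and no ``diagonal versus off-diagonal'' splitting. First, for $(v,q)=1$ the substitution $u=vt$ gives $\sum_u\chi(u^2+2suv-v^2)=\chi^2(v)\sum_t\chi(t^2+2st-1)$; the inner sum is a Jacobi-sum-type quantity attached to a quadratic polynomial of discriminant $4(s^2+1)\equiv d\pmod q$ (this is where $q\mid b$ enters), and its evaluation produces exactly the factor $\chi(-d)\left(\frac{d}{q}\right)\tau(\chi)^2/\tau(\chi^2)$. Second, since $\chi^2$ is nontrivial (this is where ``order greater than two'' is used) one has $\sum_v\left(\frac{v^2}{q^2}-\frac{v}{q}\right)\chi^2(v)=\sum_v B_2(v/q)\chi^2(v)$, which by the classical expression of $L(2,\psi)$ for an even primitive character $\psi$ in terms of the generalized Bernoulli number $B_{2,\overline\psi}$ equals $\tau(\chi^2)L(2,\overline\chi^2)/\pi^2$ up to normalization; multiplying the two evaluations gives the Lemma. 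Your steps (2)--(3) gesture at ``roots of unity assembling into $\tau(\chi)^2$'' but never identify the Jacobi sum or the Bernoulli/$L(2,\cdot)$ identity, and without these there is no proof. The convergence remark is a red herring (no interchange of limits arises in a finite identity), and you should also address the primitivity of $\chi^2$, or the induced-character correction, before invoking the $B_2$ formula.
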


\begin{lem}\label{lemma 4.3.}
Let $\chi$ be an odd primitive character of conductor $q$ with $(q,d)=1$ and of order greater than two. If $A_\chi(n)$ is defined as in \eqref{def A}, then
\begin{align}\label{eq lemma 5.3.}
\begin{split}
\frac{1}{q}A_\chi(2s)=&\frac{2}{q^2}\sum_{1\leq u,v \leq q-1}uv\chi(u^2+2suv-v^2)\\
+&2s\sum_{0\leq u,v \leq q-1}\left(\frac{v^2}{q^2}-\frac{v}{q}\right)\chi(u^2+2suv-v^2).
\end{split}
\end{align}
\end{lem}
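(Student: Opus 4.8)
The plan is to identify the right-hand side of \eqref{eq lemma 5.3.} with the value $\zeta_{P(\widetilde K)}(0,\chi)$ of the auxiliary field $\widetilde K=\mathbb{Q}(\sqrt{s^2+1})$, and then to recognise this value as $\tfrac1q A_\chi(2s)$ via Lemma \ref{lemma fatto A}. A preliminary reduction uses Lemma \ref{lemma 4.2.} (applied to $2s$ times its second sum) to turn the target identity into
$$\frac1q A_\chi(2s)=\frac{2}{q^2}\sum_{1\le u,v\le q-1}uv\,\chi(u^2+2suv-v^2)+2s\left(\frac dq\right)\frac{\chi(-d)\,\tau(\chi)^2L(2,\overline\chi^2)}{\pi^2}.$$

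Next I would evaluate $\zeta_{P(\widetilde K)}(0,\chi)$ by applying \cite[Theorem 1]{BiroGranville} to $\widetilde K$, in the same way Proposition \ref{prop 4.1} is deduced from it. The continued fraction is $\sqrt{s^2+1}=[s,\overline{2s}]$, so $l=1$, $a_1=2s$; with $\alpha=\sqrt{s^2+1}-s$ one has $\alpha\bar\alpha=-1$, $\alpha+\bar\alpha=-2s$, $p_1=1$, $q_1=2s$, hence the single quadratic form has $A_1=-1$, $B_1=-2s$, $C_1=1$, so $(-1)^1Q_1(u,v)=u^2+2suv-v^2$ and $\sum_{j=1}^{l}a_j\overline\chi((-1)^jA_j)=2s\,\overline\chi(1)=2s$. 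Substituting, and noting that $q\mid b$ forces $D\equiv s^2+1\pmod q$, so that $\chi(-4(s^2+1))=\chi(-d)$ and $\left(\tfrac{4(s^2+1)}{q}\right)=\left(\tfrac dq\right)$ while $(q,4(s^2+1))=1$ follows from $(q,d)=1$, one finds that the displayed right-hand side above is exactly $\zeta_{P(\widetilde K)}(0,\chi)$. If in addition $s$ is a positive odd integer and $s^2+1$ is squarefree, the first assertion of Lemma \ref{lemma fatto A} (which imposes no hypothesis on the class number) applied to $\widetilde K$ gives $\zeta_{P(\widetilde K)}(0,\chi)=\tfrac1q A_\chi(2s)$, which proves \eqref{eq lemma 5.3.} under these two extra assumptions on $s$.

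To remove them I would argue by interpolation in $s$. Both sides of \eqref{eq lemma 5.3.} are affine functions of the integer $s$ whose constant term and leading coefficient depend only on the residue of $s$ modulo $q$: the two character sums are $q$-periodic in $s$ through $\chi$, and $A_\chi(2(s+q))=A_\chi(2s)+2B_\chi(2s)$ with $B_\chi(2s)$ itself $q$-periodic. Hence if \eqref{eq lemma 5.3.} holds at two distinct integers in one residue class modulo $q$ — and we only need classes with $(s_0^2+1,q)=1$, since $(q,d)=1$ together with $D\equiv s^2+1\pmod q$ forces $(q,s^2+1)=1$ — then it holds throughout that class. Every such class contains infinitely many positive odd $n$ with $n^2+1$ squarefree (a standard squarefree-sieve count; no prime dividing $q$ divides $n^2+1$ for $n$ in the class), so the case already established applies to two of them and \eqref{eq lemma 5.3.} follows for the given $s$, irrespective of its parity.

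The main obstacle is the bookkeeping of the second paragraph: specialising \cite[Theorem 1]{BiroGranville} to the length-one period $[s,\overline{2s}]$ and verifying, against the sign and normalisation conventions of Proposition \ref{prop 4.1} and of Lemma \ref{lemma fatto A}, that the transcendental term comes out precisely as $2s\left(\tfrac dq\right)\chi(-d)\tau(\chi)^2L(2,\overline\chi^2)/\pi^2$ and is consistent with the Shintani-based evaluation of $\zeta_{P(\widetilde K)}(0,\chi)$. The interpolation step is then routine, amounting to matching the linear-in-$s$ expansions of the two sides.
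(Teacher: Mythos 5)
Your argument is essentially correct, but it follows a genuinely different route from the paper's. The paper proves \eqref{eq lemma 5.3.} by a direct, purely combinatorial manipulation of finite character sums: one verifies the pointwise identity $A\frac{(C-q)}{q}=\frac{1}{q^2}\left((C-q)(2sC-D)+\widehat C\widehat D-q\widehat C\right)$ for the transformation \eqref{T}, uses the sign change of $\chi(Q(C,D))$ under $T$ to replace $\widehat C\widehat D-q\widehat C$ by $-CD+qC$, and kills the remaining difference by the substitutions $(C,D)\mapsto(-C,q-D)$, $D\mapsto q-D$, $(C,D)\mapsto(q-C,-D)$ --- no auxiliary field, no analytic input, and the identity is obtained for the given $s$ directly. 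You instead read both sides as two evaluations of $\zeta_{P(\widetilde K)}(0,\chi)$ for $\widetilde K=\mathbb{Q}(\sqrt{s^2+1})$: the left side via the Shintani computation of Lemma \ref{lemma fatto A}, the right side via \cite[Theorem 1]{BiroGranville} combined with Lemma \ref{lemma 4.2.}, and then you remove the restrictions ``$s$ odd, $s^2+1$ squarefree'' by noting that both sides are affine in $s$ on each residue class modulo $q$ (your relation $A_\chi(2(s+q))=A_\chi(2s)+2B_\chi(2s)$ is exactly the paper's $A_\chi(2n)=A_\chi(2n_0)+2PB_\chi(2n_0)$) and interpolating through two admissible values produced by a squarefree sieve. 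I checked the bookkeeping you flag: with $\alpha=\sqrt{s^2+1}-s$ one indeed gets $A_1=-1$, $B_1=-2s$, $C_1=1$, hence $(-1)^1Q_1(u,v)=u^2+2suv-v^2$ and the transcendental term carries $2s\,\overline\chi(1)$; and since $d\equiv 4(s^2+1)\equiv 4(n^2+1)\pmod q$, Lemma \ref{lemma 4.2.} applies verbatim to the auxiliary fields, so the route closes. What your proof buys is a conceptual explanation of why the quantity $A_\chi(2s)$ from the $n^2+1$ family reappears here: \eqref{eq lemma 5.3.} is precisely the consistency of the Shintani and Bir\'o--Granville evaluations for $\mathbb{Q}(\sqrt{s^2+1})$. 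What it costs is extra machinery and hypotheses that the direct computation never uses: the standing assumptions $q\mid b$ and $(q,d)=1$ (needed to ensure $(q,s^2+1)=1$), the order-greater-than-two condition entering through Lemma \ref{lemma 4.2.}, the legitimacy of applying \cite[Theorem 1]{BiroGranville} to a second family (harmless, since $n^2+1\equiv 2\pmod 4$ with odd period and norm $-1$ unit matches the conventions of Proposition \ref{prop 4.1}), and a squarefree-sieve input, whereas the paper's proof is self-contained and valid for every integer $s$ at once.
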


\begin{proof}
Recall the transformation $(\widehat{C},\widehat{D})$ defined in \eqref{T}; one can easily verify that
$$
A\frac{(C-q)}{q}=\frac{1}{q^2}\left(  (C-q)(2sC-D)+\widehat{C}\widehat{D}-q\widehat{C}\right),
$$
where $A:=\lceil\frac{2nC-D}{q}\rceil$. Moreover, as explained in the end of the proof of Lemma \ref{lemma fatto A}, the orbits of $T$ on which $\chi$ is not trivial have even cardinality since $\chi(C^2+2sCD-D^2)$ changes sign with every application of $T$. Hence
\begin{align*}
&\frac{A_\chi(2s)}{q}=\sum_{0\leq C,D\leq q-1}\chi(D^2-C^2-2sCD)\frac{A(C-q)}{q}\\
=&\sum_{0\leq C,D\leq q-1}\chi(D^2-C^2-2sCD)\frac{1}{q^2}\left(  (C-q)(2sC-D)+\widehat{C}\widehat{D}-q\widehat{C}\right)
\end{align*}
and, since 
$$
\sum_{0\leq C,D\leq q-1}\chi(D^2-C^2-2sCD)\left( \widehat{C}\widehat{D}-q\widehat{C}\right)
$$
is equal to
$$
\sum_{0\leq C,D\leq q-1}\chi(D^2-C^2-2sCD)\left( -CD+qC \right),
$$
then
$$
\frac{A_\chi(2s)}{q}=\sum_{0\leq C,D\leq q-1}\chi(D^2-C^2-2sCD)\frac{1}{q^2}\left(  (C-q)(2sC-D)-CD+qC \right).
$$
We now show that the difference between
$\frac{1}{q}A_\chi(2s)$ and \eqref{eq lemma 5.3.} is zero.
Writing $(u,v)=(q-C,D)$ in \eqref{eq lemma 5.3.} one gets
\begin{align*}
&\frac{2}{q^2}\sum_{1\leq C,D\leq q-1}(q-C)D\chi(D^2-2sCD-C^2)\\
+&2s\sum_{0\leq C,D \leq q-1, 1\leq C}\left(\frac{(q-C)^2}{q^2}-\frac{q-C}{q}\right)\chi(D^2-2sCD-C^2)\\
-&\frac{1}{q^2}\sum_{0\leq C,D\leq q-1}\chi(D^2-C^2-2sCD)\left(  (C-q)(2sC-D)-CD+qC \right)
\end{align*}
and, after other calculations, the above becomes
\begin{align}
\begin{split}\label{eq 2 lemma 5.3.}
&\frac{1}{q}\sum_{1\leq C,D\leq q-1}D\chi(D^2-2sCD-C^2)-\frac{1}{q}\sum_{1\leq D\leq q-1}D\chi(D^2)\\
-&\frac{1}{q}\sum_{0\leq C,D\leq q-1}C\chi(D^2-2sCD-C^2).
\end{split}
\end{align}
Changing variables $(C,D)\mapsto (-C,q-D)$ in the first sum of \eqref{eq 2 lemma 5.3.}, $D\mapsto q-D$ in the second one and $(C,D)\mapsto(q-C,-D)$ in the third one, we see that \eqref{eq 2 lemma 5.3.} is zero.
\end{proof}

\begin{lem}\label{lemma 4.4}
Let $D$ be squarefree with $b,s$ verifying the hypotheses of Theorem \ref{thm 1}. Let $\chi$ be as above with conductor $q$ ($(q,d)=1$) and of order greater than two. Let us assume that $q|b$.
Then $$q\zeta_{P(K)}(0,\chi)-A_\chi(2s)$$
is equal to $\frac{b}{q}$ times an algebraic integer.
\end{lem}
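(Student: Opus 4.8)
The plan is to combine Proposition \ref{prop 4.1}, Lemma \ref{lemma 4.2.} and Lemma \ref{lemma 4.3.} in order to collapse $q\zeta_{P(K)}(0,\chi)-A_\chi(2s)$ into a single closed-form expression, and then to extract the factor $b/q$ from it. Abbreviate
\[
S:=\sum_{0\le u,v\le q-1}\Big(\tfrac{v^2}{q^2}-\tfrac{v}{q}\Big)\chi(u^2+2suv-v^2).
\]
Lemma \ref{lemma 4.2.} is precisely the identity $\chi(-d)\big(\tfrac{d}{q}\big)\tau(\chi)^2L(2,\overline\chi^2)/\pi^2=S$, so that Proposition \ref{prop 4.1} reads $\zeta_{P(K)}(0,\chi)=\tfrac{2}{q^2}\sum_{1\le u,v\le q-1}uv\,\chi(u^2+2suv-v^2)+S\big(2b\tau^k+2s+\tfrac{\tau^k-1}{s}\big)$, while Lemma \ref{lemma 4.3.} gives $A_\chi(2s)=\tfrac{2}{q}\sum_{1\le u,v\le q-1}uv\,\chi(u^2+2suv-v^2)+2sqS$. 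Subtracting, the two $uv$-sums cancel and the term $2sqS$ kills the summand $2s$ inside the bracket, leaving
\[
q\zeta_{P(K)}(0,\chi)-A_\chi(2s)=qS\Big(2b\tau^k+\tfrac{\tau^k-1}{s}\Big).
\]

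Next I would exploit the arithmetic of $\tau=4bs+1$. From the binomial expansion $(4bs+1)^k-1=\sum_{j\ge 1}\binom{k}{j}(4bs)^j$ one sees that $\tau^k-1=4bs\,M$ for some $M\in\mathbb{Z}$; hence $\tfrac{\tau^k-1}{s}=4bM$ and the bracket equals $b(2\tau^k+4M)$, which is visibly a multiple of $b$. Therefore
\[
q\zeta_{P(K)}(0,\chi)-A_\chi(2s)=\frac{b}{q}\cdot q^2S\,(2\tau^k+4M),
\]
and it only remains to see that $q^2S\,(2\tau^k+4M)$ is an algebraic integer. This is immediate: $q^2S=\sum_{0\le u,v\le q-1}(v^2-qv)\,\chi(u^2+2suv-v^2)$ is a $\mathbb{Z}$-linear combination of values of $\chi$, each of which is either $0$ or a root of unity, so $q^2S$ is an algebraic integer, and multiplying by the rational integer $2\tau^k+4M$ preserves this. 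This concludes the argument.

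There is no genuine obstacle here beyond bookkeeping; the two points requiring a little care are that the cancellation of the $uv$-sums and of the $2s$-term line up exactly — which is why it is essential to have first isolated $S$ via Lemma \ref{lemma 4.2.} rather than carrying around $\tau(\chi)^2L(2,\overline\chi^2)/\pi^2$ — and that the standing hypothesis $q\mid b$ of this section is what legitimizes the congruences $A_j\equiv(-1)^j$, $C_j\equiv(-1)^{j-1}$, $B_j\equiv\pm2s\pmod q$ invoked in Proposition \ref{prop 4.1}. The divisibilities $s\mid(\tau^k-1)$ and $b\mid(\tau^k-1)/s$ are elementary, as observed above.
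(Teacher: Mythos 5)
Your proof is correct and follows essentially the same route as the paper's: combine Proposition \ref{prop 4.1}, Lemma \ref{lemma 4.2.} and Lemma \ref{lemma 4.3.}, cancel the $uv$-sums and the $2s$-term, and observe that the remaining bracket $2b\tau^k+\frac{\tau^k-1}{s}$ is $b$ times an integer because $4bs\mid\tau^k-1$ (the paper states $bs\mid\tau^k-1$ without the binomial justification you supply). The only difference is cosmetic: the paper writes the result directly as $\frac{b}{q}\sum_{0\le u,v\le q-1}(v^2-qv)\chi(u^2+2suv-v^2)\bigl(2\tau^k+\frac{\tau^k-1}{bs}\bigr)$, which is exactly your $\frac{b}{q}\cdot q^2S\,(2\tau^k+4M)$.
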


\begin{proof}
By what proved in Proposition \ref{prop 4.1}, Lemma \ref{lemma 4.2.} and Lemma \ref{lemma 4.3.}
\begin{align*}
&q\zeta_{P(K)}(0,\chi)-A_\chi(2s)\\
=&\frac{b}{q}\sum_{0\leq u,v\leq q-1}(v^2-qv)\chi(u^2+2suv-v^2)\left(2\tau^k+\frac{\tau^k-1}{bs}\right)
\end{align*}
and $bs$ divides $\tau^k-1$ is an integer.
\end{proof}

Equations \eqref{eq mL} and \eqref{eq ZL mL}, by reasoning as in the last part of the proof of Lemma \ref{lemma fatto A}, give the following
\begin{lem}\label{lem 2.4.}
Let $D$ be squarefree with $b,s$ as above. Let $\chi$ be an odd primitive character of conductor $q$ with $(q,d)=1$. Let us assume that $h(D)=1$.
Then $m_\chi\neq 0$ and $$2q\zeta_K(0,\chi)m_\chi^{-1}$$
is an algebraic integer.
\end{lem}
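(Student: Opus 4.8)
The plan is to mimic the final argument in the proof of Lemma \ref{lemma fatto A}, where the same conclusion was drawn for the family $n^2+1$. The starting point is the identity $\zeta_K(s)=\zeta(s)L(s,\chi_d)$ and its twisted analogue \eqref{eq ZL}, namely $\zeta_K(s,\chi)=L(s,\chi)L(s,\chi\chi_d)$. Since $h(D)=1$ we have $P(K)=$ the set of all integral ideals, hence $\zeta_{P(K)}(s,\chi)=\zeta_K(s,\chi)$; so it is enough to show $2q\zeta_K(0,\chi)m_\chi^{-1}$ is an algebraic integer. Because $(q,d)=1$ and $d>0$ (so $\chi_d(-1)=1$), the character $\chi\chi_d$ is primitive of conductor $qd$ and odd, so by \eqref{eq mL} both $L(0,\chi)$ and $L(0,\chi\chi_d)$ are given by the explicit Bernoulli-type formulas, $L(0,\chi)=-m_\chi/q$ and $L(0,\chi\chi_d)=-\frac{1}{qd}\sum_{b=1}^{qd}b\chi(b)\chi_d(b)$. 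In particular $m_\chi\neq 0$, so $m_\chi^{-1}$ makes sense in $\mathbb{Q}(\chi)$, and, exactly as in \eqref{eq ZL mL},
\begin{equation*}
2q\zeta_K(0,\chi)m_\chi^{-1}=\frac{2}{d}\sum_{b=1}^{qd}b\chi(b)\chi_d(b)=\sum_{l=1}^{q}\chi(l)\sum_{x=0}^{d-1}\frac{2(l+xq)\chi_d(l+xq)}{d}.
\end{equation*}

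Next I would show that, for each fixed $l$, the inner sum $S_l:=2\sum_{x=0}^{d-1}(l+xq)\chi_d(l+xq)$ is divisible by $d$ as a rational integer, so that $2q\zeta_K(0,\chi)m_\chi^{-1}$ is a $\mathbb{Z}$-linear combination of the algebraic integers $\chi(l)$ and hence an algebraic integer. This is precisely the second half of the divisibility check at the end of the proof of Lemma \ref{lemma fatto A}, only here we need divisibility by $d$ alone rather than by $qd$ (the factor $q$ was needed there only because of the $q^2$ in the denominator coming from $L(0,\chi)L(0,\chi\chi_d)$; here one power of $q$ has been cancelled against $q$ in front). Since as $x$ runs over $0,\dots,d-1$ the residue $l+xq$ runs over all residues mod $d$ (using $(q,d)=1$), the substitution $y=l+xq$ gives $S_l=2\sum_{y=0}^{d-1}y\chi_d(y)$, and pairing $y$ with $d-y$ and using $\chi_d(-1)=\chi_d(d-1)\cdots$, i.e. $\chi_d(-1)=1$, yields $S_l=\sum_{y=0}^{d-1}\big(y+(d-y)\big)\chi_d(y)=d\sum_{y=0}^{d-1}\chi_d(y)=0$, which in any case is $\equiv 0\pmod d$. (One must be slightly careful that $\chi_d(0)=0$ and that the endpoint terms cause no trouble, exactly as in the earlier proof.)

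The only genuinely new ingredient compared with Lemma \ref{lemma fatto A} is verifying that $\chi\chi_d$ really is \emph{primitive} of conductor exactly $qd$, so that \eqref{eq mL} applies to it; this uses $(q,d)=1$ together with the fact that $\chi$ is primitive of conductor $q$ and $\chi_d$ is primitive of conductor $d$ (the Kronecker character of a fundamental discriminant). I expect this to be the main — though still routine — obstacle, and it is handled exactly as in \cite[Proof of Lemma~1]{Biro1} / the proof of Lemma \ref{lemma fatto A}. With that in hand, the displayed chain above shows $m_\chi\neq0$ and that $2q\zeta_K(0,\chi)m_\chi^{-1}=2q\zeta_{P(K)}(0,\chi)m_\chi^{-1}$ is an algebraic integer, completing the proof of Lemma \ref{lem 2.4.}.
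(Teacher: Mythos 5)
Your overall strategy is exactly the paper's: the paper proves this lemma by the one\-line remark that equations \eqref{eq mL} and \eqref{eq ZL mL} together with the last part of the proof of Lemma \ref{lemma fatto A} give the statement, and that is the argument you reconstruct. However, there is a concrete arithmetic slip that leaves a real gap. From $\zeta_K(0,\chi)=L(0,\chi)L(0,\chi\chi_d)=\frac{m_\chi}{q}\cdot\frac{1}{qd}\sum_{b=1}^{qd}b\chi(b)\chi_d(b)$ one gets
\begin{equation*}
2q\,\zeta_K(0,\chi)\,m_\chi^{-1}=\frac{2}{qd}\sum_{b=1}^{qd}b\chi(b)\chi_d(b),
\end{equation*}
not $\frac{2}{d}\sum_b b\chi(b)\chi_d(b)$ as you wrote: the single $q$ in front cancels only one of the two powers of $q$ in the denominator $q^2d$. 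Note that in Lemma \ref{lemma fatto A} the quantity treated is $2A_\chi(2n)m_\chi^{-1}=2q\,\zeta_{P(K)}(0,\chi)m_\chi^{-1}$, which carries exactly the same prefactor $q$; so the situation there is identical to the one here, and the denominator to be cleared is $qd$ in both cases. Consequently your claim that ``we need divisibility by $d$ alone rather than by $qd$'' is wrong, and as written your argument only shows that $2q^2\zeta_K(0,\chi)m_\chi^{-1}$ is an algebraic integer.

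The missing piece is the first half of the divisibility check from Lemma \ref{lemma fatto A}, which you explicitly discarded: for each fixed $l$ one has $l+xq\equiv l\pmod q$, and since $(q,d)=1$ the values $l+xq$ run over a complete residue system modulo $d$ while $\chi_d$ is nontrivial, so
\begin{equation*}
\sum_{x=0}^{d-1}(l+xq)\chi_d(l+xq)\equiv l\sum_{x=0}^{d-1}\chi_d(l+xq)\equiv 0\pmod q.
\end{equation*}
Combined with your divisibility by $d$ and $(q,d)=1$, this gives $qd\mid 2\sum_x(l+xq)\chi_d(l+xq)$ and closes the gap. A smaller point: the substitution $y=l+xq$ gives only a congruence modulo $d$, not an equality (the integers $l+xq$ are not reduced residues), so the conclusion is $\equiv 0\pmod d$ rather than $=0$; you hedge correctly on this, and the rest of your write-up (primitivity of $\chi\chi_d$ of conductor $qd$, oddness, $m_\chi\neq 0$) is fine and matches the paper.
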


\begin{lem}
If the hypotheses of Lemma \ref{lem 2.4.} hold with $\chi$ being of order greater than two and if moreover there exists a prime ideal $I$ of $\mathbb{Q}(\chi)$ containing $m_\chi$ and an odd prime $r$ such that $r|\frac{b}{q}$, then $A_\chi(2s)\in I$.
\end{lem}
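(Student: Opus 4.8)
The plan is to glue together Lemma \ref{lem 2.4.} and Lemma \ref{lemma 4.4} and then reduce the resulting identity modulo $I$. First I would record that under the standing hypothesis $h(D)=1$ every nonzero integral ideal of $\mathcal{O}_K$ is principal, so $\zeta_K(s,\chi)=\zeta_{P(K)}(s,\chi)$ in the half-plane $\mathrm{Re}(s)>1$ and hence, by uniqueness of meromorphic continuation, $\zeta_K(0,\chi)=\zeta_{P(K)}(0,\chi)$. Lemma \ref{lem 2.4.} then says that $\eta:=2q\zeta_{P(K)}(0,\chi)\,m_\chi^{-1}$ is an algebraic integer; reading off its value from \eqref{eq ZL mL} (and noting $\chi_d$ takes values in $\{0,\pm1\}$, so $\mathbb{Q}(\chi\chi_d)\subseteq\mathbb{Q}(\chi)$) one sees $\eta\in\mathcal{O}_{\mathbb{Q}(\chi)}$, so that $2q\zeta_{P(K)}(0,\chi)=\eta\,m_\chi$ with $m_\chi\in I$.

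On the other side, Lemma \ref{lemma 4.4} provides $q\zeta_{P(K)}(0,\chi)-A_\chi(2s)=\frac{b}{q}\,\gamma$ for some algebraic integer $\gamma\in\mathcal{O}_{\mathbb{Q}(\chi)}$, where $\frac{b}{q}$ is a rational integer since $q\mid b$; moreover $A_\chi(2s)\in\mathcal{O}_{\mathbb{Q}(\chi)}$ is immediate from the definition \eqref{def A}. Multiplying this relation by $2$ and substituting the previous display, I would obtain the key identity $2A_\chi(2s)=\eta\,m_\chi-2\frac{b}{q}\,\gamma$, an equality in $\mathcal{O}_{\mathbb{Q}(\chi)}$.

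The last step would be to reduce this identity modulo $I$. Since $m_\chi\in I$ and $\eta\in\mathcal{O}_{\mathbb{Q}(\chi)}$, the term $\eta\,m_\chi$ lies in $I$; since $r\in I$ (equivalently $I$ lies above $r$) and $r\mid\frac{b}{q}$, we get $\frac{b}{q}\in I$ and hence $2\frac{b}{q}\,\gamma\in I$. Therefore $2A_\chi(2s)\in I$. Finally $I\cap\mathbb{Z}$ is a prime ideal of $\mathbb{Z}$ containing the odd prime $r$, so $I\cap\mathbb{Z}=(r)$ and $2\notin I$; as $I$ is prime this forces $A_\chi(2s)\in I$, which is the claim.

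I do not expect a genuine obstacle here: the statement is essentially an exercise in combining two already-established identities. The only points requiring some care are bookkeeping ones — checking that $\eta$, $\gamma$ and $A_\chi(2s)$ all lie in the ring of integers of $\mathbb{Q}(\chi)$ rather than merely being algebraic integers of a larger field, and justifying the reduction $\zeta_K=\zeta_{P(K)}$ from $h(D)=1$ — together with making sure $2$ is a unit modulo $I$, which is exactly where the oddness of $r$ enters.
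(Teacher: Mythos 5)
Your proposal is correct and follows essentially the same route as the paper: combine Lemma \ref{lem 2.4.} (giving $2q\zeta_{P(K)}(0,\chi)=\eta\, m_\chi\in I$) with Lemma \ref{lemma 4.4} to place $2A_\chi(2s)$ in $I$, then divide out the $2$ using that $I$ lies above an odd prime. Your extra bookkeeping — the identification $\zeta_K=\zeta_{P(K)}$ under $h(D)=1$ and the check that everything lives in $\mathcal{O}_{\mathbb{Q}(\chi)}$ — is left implicit in the paper but is exactly the right justification.
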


\begin{proof}
$\zeta_K(0,\chi)\in \mathbb{Q}(\chi)$
and $m_\chi \in I$, by Lemma \ref{lem 2.4.}, imply $2q\zeta(0,\chi)\in I.$
Since by Lemma \ref{lemma 4.4}
$$2q\zeta(0,\chi)-2A_\chi(2s)\in\frac{b}{q}\mathcal{O}_K$$
and $r|\frac{b}{q}$, we have $2q\zeta(0,\chi)-2A_\chi(2s)\in I$.
It follows that $2A_\chi(2s)\in I$
and, as $I$ is prime and it is not above $2$, $A_\chi(2s)\in I$. 
\end{proof}

In the previous section we exhibited some characters $\chi$ and prime ideals $I$. If we repeat the reasoning of section \ref{family nn+1}, we can show that if $A_\chi(2s)\in I$ for all these characters and ideals, then
$s^2+1$ is a square for at least one of the moduli $q=5,7,41,61,1861$.
We point out that here it is not necessary for $s$ to be odd.
We proved the

\begin{prop}\label{teorema num class}
Let $b,s,k>0$ such that $b+s$ is odd and $D$ is squarefree, let us assume $h(D)=1$. Suppose moreover that $41\cdot61\cdot175\cdot1861|b$. Then $D$ is a square for at least one of the following moduli $q=5,7,41,61,1861.$
\end{prop}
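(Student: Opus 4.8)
The plan is to feed the conclusion $A_\chi(2s)\in I$ of the last lemma above into the residue-class sieve of Section~\ref{family nn+1}, exactly as Theorem~\ref{thm 2} was obtained there, but now with $s$ playing the role of the odd integer $n$. First I would dispose of a degenerate case: if some prime $p$ dividing one of the conductors $41,61,175,1861$ also divides $D$, then $D\equiv 0\pmod p$ is a square modulo $p\in\{5,7,41,61,1861\}$ and we are done; so we may assume $(q,D)=1$, hence $(q,d)=1$ (recall $d=4D$ and all four conductors are odd), for each conductor $q$ of the four characters of \cite[\S4]{Biro1}. By appendix~\ref{app A} these characters are odd, primitive, of order greater than two, and for each of them there is a prime ideal $I$ of $\mathbb{Q}(\chi)$ with $m_\chi\in I$ lying over an odd prime $r\neq q$ for which condition~$(*)$ holds; the factor $41\cdot61\cdot175\cdot1861$ was chosen precisely so that this prime $r$ divides $b/q$ in every case. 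Thus the hypotheses of Lemma~\ref{lem 2.4.}---we are assuming $h(D)=1$, and $b+s$ is odd with $D$ squarefree and $(q,d)=1$---together with those of the last lemma above are met, and that lemma yields $A_\chi(2s)\in I$ for each of the four pairs $(\chi,I)$.

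Now I would run the sieve of Section~\ref{family nn+1} verbatim with $s$ in place of $n$. Writing $s=Pq+s_0$ with $0\le s_0<q$, the identity $A_\chi(2s)=A_\chi(2s_0)+2PB_\chi(2s_0)$ combined with $A_\chi(2s)\in I$ and condition~$(*)$ forces $P$, and hence $s$, into a single class modulo $I$, i.e. modulo $r$, whenever $s_0\in U_q$; running over the four characters one accumulates such congruences, and the PARI/GP computation recorded in appendix~\ref{app A} shows that no residue class survives in which $s^2+1$ is a non-residue modulo all of $5,7,41,61,1861$. Since $D\equiv s^2+1\pmod q$ for every $q\mid b$, this is exactly the assertion of the proposition. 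The transfer from Section~\ref{family nn+1} is legitimate because that argument uses $A_\chi(2n)\in I$ as its only arithmetic input---everything downstream is a manipulation of residue classes modulo the odd primes $r$---so it is indifferent to the parity of the argument and never invokes that $s^2+1$ is squarefree or a genuine field discriminant; this is why, unlike in Theorem~\ref{thm 2}, $s$ need not be odd here.

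The only real content is computational, and is deferred to the appendices: verifying condition~$(*)$ for the four character/ideal pairs (a finite check that $I\mid m_\chi$ but $I\nmid(B_\chi(a))$ for every $a\in U_q$), confirming along the lines of appendix~\ref{app A} that Biró's choices for Yokoi's family carry over to the present family, and running the sieve to exhaustion. I would also check at the outset that every prime $r$ occurring anywhere in the sieve divides $b$---this is the one place where the hypothesis $41\cdot61\cdot175\cdot1861\mid b$ is genuinely used, and it is what pins down those four factors. I expect this bookkeeping, and in particular the verification that the finitely many residue classes produced by the sieve all make $s^2+1$ a square modulo one of $5,7,41,61,1861$, to be the main obstacle; no analytic or algebraic input beyond Proposition~\ref{prop 4.1}, Lemmas~\ref{lemma 4.2.}--\ref{lemma 4.4}, Lemma~\ref{lem 2.4.}, and the last lemma above is needed.
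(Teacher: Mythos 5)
Your proposal matches the paper's argument essentially verbatim: the paper also obtains $A_\chi(2s)\in I$ for the four character/ideal pairs via Lemma~\ref{lemma 4.4}, Lemma~\ref{lem 2.4.} and the final unnumbered lemma (using $41\cdot61\cdot175\cdot1861\mid b$ to ensure $r\mid b/q$), reruns the Section~\ref{family nn+1} sieve with $s$ in place of $n$ (noting $s$ need not be odd), and concludes via $D\equiv s^2+1\pmod q$ for $q\mid b$. Your explicit disposal of the case $(q,d)>1$ is a small rigor bonus the paper leaves implicit.
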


We now prove Theorem \ref{thm 1}.
\begin{proof}[Proof of Theorem \ref{thm 1}]
Let $b$ be positive and $q\in\{5,7,41,61,1861\}$ such that $D$ is a square modulo $q$. Then the ideal $(q)$ factors into two prime ideals, not necessarily distinct. That, together with the hypothesis $h(D)=1$, implies the existence of $\omega\in\mathcal{O}_K$ of norm $q$.
Since $D\not\equiv1$ modulo $4$ let $A,B\in\mathbb{Z}$ be such that 
$$\omega=A-B\sqrt{D}.$$
The equality $|\omega\bar\omega|=q$ is equivalent to
$$|A^2-B^2D|=q,$$
that in turn implies $B\neq0$ and $(A,B)=1$. We can assume $A,B>0$.
One can verify that $|\omega|\leq\frac{q}{B\sqrt{D}}$ and therefore 
$B|\omega|<\frac{1}{2}$.
This implies that $\frac{A}{B}$ is a convergent of $\sqrt{D}$. Moreover 
by the triangle inequality applied to $|\bar\omega|=|\omega+2B\sqrt{D}|$, it follows that
$$
|\bar\omega|\leq 2B\sqrt{D}+\frac{q}{B\sqrt{D}}
$$
and in turn
\begin{align}\label{diseq1}
\frac{Bq}{2B\sqrt{D}+\frac{q}{B\sqrt{D}}}\leq B|\omega| \leq \frac{q}{\sqrt{D}}.
\end{align}
Instead if we write $\frac{A}{B}=[b_0,b_1,\ldots,b_j]$ ($\frac{A}{B}$ is a convergent) for some $j$ with notation as in Lemma \ref{lemma divisione}, by the third displayed formula in  \cite[p, 17]{Schmidt} we get
\begin{align}\label{diseq2}
\frac{1}{b_{j+1}+2}\leq B|\omega|\leq\frac{1}{b_{j+1}}.
\end{align}
The inequalities \eqref{diseq1} e \eqref{diseq2} must hold at the same time.
For the inequality \eqref{diseq2}, due to the periodicity of the continued fraction, we can think $0\leq j\leq l-1$ where $l$ is the period of the continued fraction of $\sqrt{D}$.

\begin{itemize}
   \item[a)] Let $j$ be such that $0\leq j\leq 2k-1$.
Thus $1\leq b_{j+1}\leq 2b(4bs+1)^{k-1}$
and, using the upper bound from \eqref{diseq1} and the lower bound from \eqref{diseq2}, we conclude that 
$$
\frac{1}{2b(4bs+1)^{k-1}+2}\leq \frac{q}{\sqrt{D}}.
$$ 
Then
\begin{equation*}
b(4bs+1)^k+s=\lfloor\sqrt{D}\rfloor<\sqrt{D}\leq 2q[b(4bs+1)^{k-1}+1].
\end{equation*}

   \item[b)]
If $j=2k$ using the lower bound from \eqref{diseq1} and the upper bound from \eqref{diseq2} we get 
$$
\frac{Bq}{2B\sqrt{D}+\frac{q}{B\sqrt{D}}}\leq \frac{1}{b_{2k+1}}.
$$
This implies
$$
(q-1)\frac{2b(4bs+1)^k+2s}{q}\leq \frac{2}{q}+\frac{1}{B^2\sqrt{D}}<2.
$$
\end{itemize}
We get then
$$
b< \frac{b(4bs+1)^k+s}{2b(4bs+1)^{k-1}+2}\leq q
$$
or
$$
(q-1)\frac{2b(4bs+1)^k+2s}{q} <2,$$
but they are both a contradiction because relatively to the former $q|b$, while for the latter $q\geq2$ and $k\geq1$.

If $h(D)=1$, by Proposition \ref{teorema num class} and the previous discussion we deduce that $b=0$, hence $D=s^2+1$ with $s$ odd. We complete the proof by Corollary \ref{cor 2 thm 2}.
\end{proof}

%DIM DELLA PROPOSIZIONE 1 e 2
\section{Proof of Propositions \ref{prop 1} and \ref{prop 2}} \label{proof prop 1}
Let us assume $D$ is as in \eqref{D McZim} so that the continued fraction of $\sqrt{D}$ is as in \eqref{expansion 1}. We recall that we defined $\tau:=4bs+1$.
As we want to deal with both the cases $D\equiv 1$ modulo $4$ and $D\not\equiv 1$ modulo $4$, it is useful to derive the continued fraction of $\frac{1+\sqrt{D}}{2}$ from that of $\sqrt{D}$. 

\begin{lem}\label{lemma divisione}
Let $x$ be a real number with continued fraction expansion
$$x=[a_0,\overline{a_1,a_2,\ldots,a_l}].$$
Assume that $a_i\geq3$ for $i\geq 1$ and $a_i$ even for $i\geq0$, then

\begin{equation*}
\frac{1+x}{2}=\left[\frac{a_0}{2},\overline{1,1,\frac{a_1-2}{2},1,1,\frac{a_2-2}{2},1,1,\ldots,\frac{a_l-2}{2}}\right].
\end{equation*}
\end{lem}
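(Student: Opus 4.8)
\emph{Proof plan.} The plan is to unfold the transformation $x\mapsto\frac{1+x}{2}$ by applying one elementary continued-fraction identity repeatedly along the complete quotients of $x$. First I would fix notation: set $a_{i+l}:=a_i$ for $i\geq1$, let $\xi_0=x$, and for $i\geq1$ let $\xi_i=[\,\overline{a_i,a_{i+1},\ldots,a_{i+l-1}}\,]$ be the $i$-th complete quotient of $x$, so that $x=a_0+1/\xi_1$, $\xi_i=a_i+1/\xi_{i+1}$, and $\xi_{i+l}=\xi_i$ for $i\geq1$. Since each $a_i$ with $i\geq1$ is even and $\geq3$, it is in fact $\geq4$, hence $\xi_i>a_i\geq4$; consequently each number $w_i:=\frac{\xi_i-1}{2}$ satisfies $w_i>\frac32>1$, each $\frac{a_i-2}{2}$ is a positive integer, and $\frac{a_0}{2}\in\mathbb{Z}$ (here the hypothesis that $a_0$ is even enters).

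The only computational ingredient is the identity
$$\frac{2t}{t+1}=1+\cfrac{1}{1+\cfrac{1}{\frac{t-1}{2}}}\qquad(t\neq1),$$
checked by clearing denominators. Combining it (with $t=\xi_{i+1}$) with the rearrangement $w_i=\frac{a_i-2}{2}+\frac{1+1/\xi_{i+1}}{2}=\frac{a_i-2}{2}+\dfrac{1}{\,2\xi_{i+1}/(\xi_{i+1}+1)\,}$ gives, for every $i\geq1$,
$$w_i=\frac{a_i-2}{2}+\cfrac{1}{1+\cfrac{1}{1+\cfrac{1}{w_{i+1}}}},$$
and the same rearrangement applied to $\frac{1+x}{2}=\frac{a_0}{2}+\dfrac{1}{\,2\xi_1/(\xi_1+1)\,}$ gives
$$\frac{1+x}{2}=\frac{a_0}{2}+\cfrac{1}{1+\cfrac{1}{1+\cfrac{1}{w_1}}}.$$

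Substituting the recursion for $w_1,w_2,\ldots$ into the last display unfolds $\frac{1+x}{2}$ into the continued fraction $[\frac{a_0}{2},1,1,\frac{a_1-2}{2},1,1,\frac{a_2-2}{2},\ldots]$; since $w_{i+l}=w_i$, after $l$ blocks the expression returns to $w_1$, so this expansion is periodic with block $1,1,\frac{a_1-2}{2},\ldots,1,1,\frac{a_l-2}{2}$, which is the claimed formula. To conclude that the right-hand side really equals $\frac{1+x}{2}$ as real numbers (and not merely formally), I would invoke uniqueness of the simple continued fraction expansion of an irrational: every displayed partial quotient after $\frac{a_0}{2}$ is a positive integer, and every quantity playing the role of a complete quotient past the first --- namely $w_i$, $1+1/w_i$, or $1+\cfrac{1}{1+1/w_i}$ --- exceeds $1$. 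I do not expect a genuine obstacle here; the one thing to watch is that the hypotheses ``$a_i$ even'' and ``$a_i\geq3$'' are precisely what force $\frac{a_i-2}{2}$ to be a positive integer and keep $w_i>1$, so that the string written down is a bona fide simple continued fraction --- in particular the borderline case $a_i=4$, which produces a run of three consecutive $1$'s, causes no difficulty.
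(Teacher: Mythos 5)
Your proof is correct and follows essentially the same route as the paper's (sketched) argument: write $\frac{1+x}{2}=\frac{a_0}{2}+\frac{1}{2\xi_1/(\xi_1+1)}$, expand $\frac{2t}{t+1}$ as $[1,1,\frac{t-1}{2}]$, and iterate along the complete quotients, using evenness and $a_i\geq 3$ to keep the partial quotients positive integers and the tails greater than $1$. Your version simply makes explicit the recursion $w_i\mapsto w_{i+1}$ and the appeal to uniqueness of the expansion, which the paper leaves implicit in ``one can go on in the same way.''
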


\begin{proof}
We just sketch the proof. Since $a_0$ is even, it follows that
$\cfrac{1+x}{2}=\cfrac{a_0}{2}+\cfrac{1}{\cfrac{2\alpha_1}{\alpha_1+1}},$
where $\alpha_j:=[a_j,a_{j+1},\ldots]$.
Since $\alpha_1>3$, we get
$$\frac{1+x}{2}=\frac{a_0}{2}+\cfrac{1}{1+\cfrac{1}{1+\cfrac{1}{\cfrac{\alpha_1-1}{2}}}}.$$
Therefore
$$
\frac{1+x}{2}=\left[\frac{a_0}{2},1,1,\frac{\alpha_1-1}{2}\right]
$$
and as $\lfloor \alpha_1-1 \rfloor$ is odd and $\alpha_1>3$ one can go on in the same way proving the lemma.
\end{proof}
In particular if $D$ is as in \eqref{D McZim} ans $D\equiv 1$ modulo $4$ with $b\geq2$ and $k\geq1$, then
$$
\frac{1+\sqrt{D}}{2}=[b_0,\overline{b_1,b_2,\ldots,b_{3(2k+1)}}]
$$
with

\begin{align}\label{esp lemma divisione}
\begin{split}
&b_{3(2i-1)}=b\tau^{i-1}-1,\quad b_{3(2i)}=b\tau^{k-i}-1 \qquad (1\leq i\leq k)\\
&b_0=\frac{b\tau^k+s}{2},\quad b_{3(2k+1)}=2b_0-1,\quad \mbox{and}\quad b_i=1\  \mbox{otherwise.}
\end{split}
\end{align}

Now let $(b,s)>2$ and $q$ be a prime dividing $(b,s)$.
Since $D\equiv 1$ modulo $q$ it is then a quadratic residue modulo $q$ and thus the ideal $(q)$ factors in the product of $2$ ideals in $\mathcal{O}_K$ if $q$ is odd. Actually $(b,s)=2^t$ with $t\geq2$ implies that also $(2)$ is not inert. Therefore we can always choose $q$ so that $(q)$ factors into two prime ideals of norm $q$ and, if we add the hypothesis $h(D)=1$, then there exists $\omega\in \mathcal{O}_K$ such that $|\omega\bar\omega|=q$; we want to use the property of the continued fractions of $\sqrt{D}$ or $\frac{1+\sqrt{D}}{2}$ to get a contradiction.  

\begin{proof}[Proof of Proposition \ref{prop 1}]
In the following we assume $h(D)=1$ and $(b,s)>2$ and we distinguish two cases based on if $D\equiv1$ modulo $4$ or not, reasoning respectively as \cite[§3]{Biro2} and as in the proof of Theorem \ref{thm 1}. In each case we are led to two possible inequalities for both cases:
\begin{equation}\label{eq 1 def prop 1}
b\tau^k+s<\sqrt{D}\leq 2q[b\tau^{k-1}+1].
\end{equation}
and
\begin{equation}\label{eq 2 def prop 1}
(q-1)\frac{b_l}{q} <2,
\end{equation}
where $b_l$ is the last term of the period of the continued fraction we are dealing with.
By \eqref{eq 1 def prop 1} and $q|(b,s)$ it follows that
$$
2q[b\tau^{k-1}+1]\leq 2bs\tau^{k-1}+2bs\leq 4bs\tau^{k-1}<\tau^k
$$ 
and thus $b\tau^k+s<\tau^k$ which is absurd. 
Inequality \eqref{eq 2 def prop 1} also gives a contradiction because $q\geq2$ and $k\geq1$.
This proves Proposition \ref{prop 1}.
\end{proof}

\begin{proof}[Proof of Proposition \ref{prop 2}]
Let us suppose $h(D)=1$ and $q$ is the smallest prime dividing $\tau$. Since $\tau$ is odd and not a prime, $q$ is odd and $q\leq \tau^{1/2}$. In order to settle Proposition \ref{prop 2}, we need to find out the continued fraction of $\frac{1+\sqrt{D}}{2}$ also when $b=1$ because equations \ref{esp lemma divisione} assume $b\geq 2$. We omit the proof since it goes as the one of Lemma \ref{lemma divisione}.

\begin{lem}\label{lemma divisione 2}
Being $D$ as in \eqref{D McZim}, if $b=1$, $k\geq2$ and $D\equiv 1$ modulo $4$ then
$$
\frac{1+\sqrt{D}}{2}=[b_0,\overline{b_1,b_2,\ldots,b_{6k-1}}]
$$
with
\begin{align*}
&b_{6(i-1)+7}=\tau^{i}-1,\quad b_{6(i-1)+4}=\tau^{k-i}-1 \qquad (1\leq i\leq k-1)\\
&b_0=\frac{\tau^k+s}{2},\quad b_{6k-1}=2b_0-1,\quad b_2,b_{6k-3}=2,\quad \mbox{and}\quad b_i=1\  \mbox{otherwise.}
\end{align*}
If instead $k=1$ then
$$
\frac{1+\sqrt{D}}{2}=[\frac{5s+1}{2},\overline{1,2,2,1,5s}].
$$
\end{lem}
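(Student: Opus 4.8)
The plan is to mimic the proof of Lemma \ref{lemma divisione} but starting from the specific continued fraction expansion of $\sqrt{D}$ provided by \eqref{expansion 1} with $b=1$, namely
$$
\sqrt{D}=[\tau^k+s,\overline{2,2\tau^{k-1},2\tau,2\tau^{k-2},\ldots,2\tau^{k-1},2,2\tau^k+2s}].
$$
The subtlety is that Lemma \ref{lemma divisione} requires $a_i\geq 3$ for $i\geq 1$, but when $b=1$ the very first partial quotient after $a_0$ is $a_1=2b=2$, and the hypothesis of Lemma \ref{lemma divisione} fails at that spot. So I would first isolate the one anomalous step: compute $\tfrac{1+x}{2}$ where $x=[a_0,2,a_2,\ldots]$ directly, using $\alpha_1=[2,a_2,\ldots]=2+1/\alpha_2$ with $\alpha_2>3$. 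The identity $\tfrac{1+x}{2}=\tfrac{a_0}{2}+\cfrac{1}{2\alpha_1/(\alpha_1+1)}$ still holds since $a_0$ is even; then substituting $\alpha_1=2+1/\alpha_2$ gives $\tfrac{2\alpha_1}{\alpha_1+1}=\cfrac{4\alpha_2+2}{3\alpha_2+1}$, which one expands as $[1,2,2,\ldots]$ feeding into $\cfrac{\alpha_2-1}{2}$ (this is exactly the mechanism that produces the leading block $b_1=1,b_2=2$ and, by the symmetry of the palindrome, the trailing block $\ldots,b_{6k-3}=2,b_{6k-2}=1$). For all the interior partial quotients $a_i=2\tau^{\pm}\geq 2\cdot5=10\geq 3$ the original computation of Lemma \ref{lemma divisione} applies verbatim, each even $a_i\geq 4$ being replaced by the triple $1,1,\tfrac{a_i-2}{2}=\tau^{\pm}-1$.

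The key steps, in order, are: (i) write down \eqref{expansion 1} for $b=1$ and note it has period $2k+1$ with the palindromic shape $[\,2,\ 2\tau^{k-1},\ 2\tau,\ 2\tau^{k-2},\ \ldots,\ 2\tau^{k-1},\ 2,\ 2\tau^k+2s\,]$; (ii) handle the anomalous quotient $a_1=2$ as above, getting the boundary pattern $1,2,2$ at the start and its mirror at the end; (iii) apply the substitution rule $a_i\mapsto 1,1,\tfrac{a_i-2}{2}$ to each of the remaining $2k-2$ interior quotients $2\tau^{\pm}$, which exactly accounts for the $6(i-1)+4$ and $6(i-1)+7$ indexed entries in the statement (the arithmetic on indices: each interior quotient expands to three quotients, so $2k-2$ of them contribute $6k-6$ entries, plus the boundary contributions); (iv) treat the final quotient $2\tau^k+2s=2(\tau^k+s)$, which is even and $\geq 4$, producing $1,1,\tfrac{2\tau^k+2s-2}{2}=\tau^k+s-1=2b_0-1$; (v) verify the total length is $6k-1$ and reconcile with $k=1$ separately, where the interior blocks are empty and only the boundary pattern survives, giving $[\tfrac{5s+1}{2},\overline{1,2,2,1,5s}]$ (here $\tau=4s+1$, so $\tau+s=5s+1$ and $2b_0-1=5s$, and note $2\tau^k+2s=2(5s+1)$ has the block $1,1,5s$ but the $1$'s merge with the adjacent boundary $1$'s — this merging is why the $k=1$ answer looks shorter than the formula would naively predict, and it is worth checking carefully).

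The main obstacle I anticipate is bookkeeping rather than conceptual: precisely tracking how the expanded blocks concatenate and where consecutive $1$'s coming from different sources must be merged or kept separate, especially at the junction between the anomalous $a_1=2$ block and the first interior block, and symmetrically at the tail. In Lemma \ref{lemma divisione} this never arises because every $a_i\geq 3$ keeps the blocks cleanly separated by a quotient $\geq 1$ that is itself the "large" entry $\tfrac{a_i-2}{2}\geq 1$; with $a_1=2$ the entry $\tfrac{a_1-2}{2}=0$ would be illegal, so the algorithm genuinely reorganizes the neighborhood, and one must check the resulting expansion is a legitimate continued fraction (all entries $\geq 1$, last entry $\geq 2$ or handled by the periodicity convention) and that it indeed represents $\tfrac{1+\sqrt D}{2}$. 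Since the paper explicitly says "We omit the proof since it goes as the one of Lemma \ref{lemma divisione}," a short remark confirming that the only new phenomenon is the $a_1=2$ boundary adjustment, resolved as above, suffices.
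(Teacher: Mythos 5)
The paper offers no proof of this lemma beyond the remark that it goes as Lemma \ref{lemma divisione}, and your plan is exactly the intended argument: run the computation of Lemma \ref{lemma divisione} on \eqref{expansion 1} with $b=1$, treating separately the two partial quotients equal to $2$ (namely $a_1=2b$ and $a_{2k}=2b$), for which $\frac{a_i-2}{2}=0$ and the adjacent blocks reorganize. One local computation in your step (ii) is off: for $k\geq 2$ one finds $\frac{2\alpha_1}{\alpha_1+1}=\frac{4\alpha_2+2}{3\alpha_2+1}=\left[1,2,1,\frac{\alpha_2-1}{2}\right]$, not $[1,2,2,\ldots]$, so the leading boundary pattern is $b_1,b_2,b_3=1,2,1$ followed by $b_4=\frac{a_2-2}{2}=\tau^{k-1}-1$ (consistent with the lemma, which sets $b_3=1$), and symmetrically the tail is $\ldots,1,2,1,2b_0-1$; the shape $1,2,2,1,5s$ arises only for $k=1$, where $a_1=a_2=2$ are adjacent and the two collapses interact. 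With that correction your bookkeeping reproduces the stated indices, and your flagged concern about the merging is precisely the right thing to check.
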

If $q$ is a prime as above $D$ is obviously a square modulo $q$ and therefore we can proceed as in the proof of Proposition \ref{prop 1} getting, for each possible case ($D\equiv1$ or $D\not\equiv1$ modulo $4$), the inequalities \eqref{eq 1 def prop 1} and
\begin{equation}\label{eq 2 def prop 2}
b\tau^k+s<5.
\end{equation}
Inequality \eqref{eq 2 def prop 2} is actually quite rough but in that form it is valid for every case. 
Just for $b=k=1$ and $D\equiv 1 $ modulo $4$ \eqref{eq 1 def prop 1} is replaced by $\tau+s<8q$. Clearly \eqref{eq 2 def prop 2} is always absurd while $\tau+s<8q$ is absurd if $\tau\geq 64$ since $q\leq\tau^{1/2}$, so let us divide the rest of the proof into cases.

\begin{itemize} 
   \item[1)] 
Assume $b,k=1$. If $\tau\geq 64$ we immediately get an absurd for every class of $D$ modulo $4$, so if $h(D)=1$ then
$$
\tau \in [9,21,25,33,45,49,57]
$$
or equivalently
$$
s\in [2,5,6,8,11,12,14]
$$
and we check by PARI/GP that all class numbers are greater than one.

   \item[2)] Assume not both $b$ and $k$ are $1$ so that we can focus just on \eqref{eq 1 def prop 1}. If we define $a:=\log_9{6}$
we can show that
\begin{equation}\label{eq 1 def prop 1 bis}
\tau^a\leq \frac{b\tau^k+s}{b\tau^{k-1}+1}
\end{equation}
for all $k$; indeed this is equivalent to $1\leq b\tau^{k-1}(\tau^{1-a}-1)+s\tau^{-a}$
and 
$$
b\tau^{k-1}(\tau^{1-a}-1)+s\tau^{-a}\geq 2(\tau^{1-a}-1)\geq 2(9^{1-a}-1)=1.
$$
Since $\tau\geq9$, on the other hand we have that $2q\leq 2\tau^{1/2}\leq \tau^a$
and so, by \eqref{eq 1 def prop 1} and \eqref{eq 1 def prop 1 bis}, $\tau^a<2q\leq \tau^a$.
\end{itemize}
The proof is complete.
\end{proof}

%BIBLIOGRAFIA
\addcontentsline{toc}{chapter}{Bibliografia}

\addresshere

\clearpage

%APPENDICI
\appendix

\section{Outcome of computations}\label{app A}

The first character $\chi_1$ is of conductor $175$, $r$ is set equal to $61$ and $I=(61,i\omega\xi-10)$, where $\omega$ is a primitive third root of unity and $\xi$ a primitive fifth root of unity. It is straightforward to check that $m_{\chi_1}\in I$ (this is actually stated in \cite{Biro1} but we preferred to repeat all the calculations with PARI/GP).
Now we write down the residue classes $n_0$ modulo 175 which are in $U_{175}$ (for the others we know by definition of $U_{175}$ that ${n_0}^2+1$ is a quadratic residue modulo $5$ or $7$) together with
the corresponding values of $A_{\chi_1}$, $B_{\chi_1}$, the class of $n$ modulo $r$ ($r=61$) and, eventually, the Kronecker symbol 
$\left(\frac{{n_0}^2+1}{r}\right)$.
We get 

{\centering
\begin{longtable}{c|c|c|c|c}
\hline
$n_0$ & $A_{\chi_1}(2n_0)$ & $B_{\chi_1}(2n_0)$ & $n$ mod  $61$ & $\left(\frac{\cdot}{\cdot}\right)$\\
\hline
4& 0& 33& 4& -1\\
9& 34& 44& 1& -1\\
11& 34& 53& 55& -1\\
16& 1& 44& 5& -1\\
19& 40& 30& 4& -1\\
24& 20& 50& 50& 0\\
26& 14& 32& 43& 1\\
31& 42& 24& 38& 1\\
39& 27& 53& 56& -1\\
44& 11& 30& 17& 1\\
46& 1& 33& 11& 0\\
51& 26& 32& 39& 1\\
54& 59& 26& 49& -1\\
59&30& 23& 43& 1\\
61& 38& 51& 58& -1\\
66&38& 23& 1& -1\\
74& 26& 50& 59& 1\\
79& 26& 26& 22& 1\\
81& 24& 24& 24& -1\\
86& 51& 51& 29& 1\\
89& 51& 51& 32& 1\\
94& 24& 24& 37& -1\\
96& 26& 26& 39& 1\\
101& 13& 50& 2& 1\\
109& 8& 23& 60& -1\\
114&3& 51& 3& -1\\
116& 16& 23& 18& 1\\
121& 54& 26& 12& -1\\
124& 38& 32& 22& 1\\
129&4& 33& 50& 0\\
131& 49& 30& 44& 1\\
136&18& 53& 5& -1\\
144& 6& 24&23& 1\\
149& 50& 32& 18& 1\\
151& 19& 50& 11& 0\\
156& 20& 30& 57& -1\\
159& 26& 44& 56& -1\\
164& 11& 53& 6& -1\\
166& 54& 44& 60& -1\\
171& 5& 33& 57& -1\\
\end{longtable}\par
}

\medskip
We are left to consider the following residues modulo $175$:
\begin{verbatim}
[4, 9, 11, 16, 19, 39, 54, 61, 66, 81, 94, 109, 114, 121, 
136, 156, 159, 164, 166, 171]
\end{verbatim}

The second character $\chi_2$ is also of conductor $175$, $r$ is set to be equal to $1861$ and $I=(1861,i\omega\xi-173)$, where again $\omega$ is a primitive third root of unity and $\xi$ is a primitive fifth root of unity. As before we see that $m_{\chi_2}$ is in $I$.
This time we get
\medskip

{\centering
\begin{longtable}{c|c|c|c|c}
\hline
$n_0$ & $A_{\chi_2}(2n_0)$ & $B_{\chi_2}(2n_0)$ & $n$ mod  $1861$ & $\left(\frac{\cdot}{\cdot}\right)$\\
\hline
4& 0& 1121& 4& 1\\
9& 1254& 1060& 1073& 1\\
11& 60& 1588& 746& 1\\
16& 135& 1060& 1484& -1\\
19& 1633& 1397& 1091& 1\\
39& 344& 1588& 531& -1\\
54& 1808& 1720& 1044& -1\\
61& 114& 1389& 1332& -1\\
66& 1669& 1102& 88& 1\\
81& 1498& 1498& 924& -1\\
94& 1498& 1498& 937& -1\\
109& 535& 1102& 1773& 1\\
114& 803& 1389& 529& -1\\
121& 1632& 1720& 817& -1\\
136& 971& 1588& 1330& -1\\
156& 1161& 1397& 770& 1\\
159& 124& 1060& 377& -1\\
164& 1255& 1588& 1115& 1\\
166& 866& 1060& 788& 1\\
171& 381& 1121& 1857& 1\\
\end{longtable}\par
}
\medskip

We have to check the remaining classes:
\newline

{\centering
\begin{longtable}{c|c|c}
\hline
$n$ mod $175$ & $n$ mod $61$ & $n$ mod $1861$\\
\hline
16 & 5 & 1484\\
39 & 56 & 531\\
54 & 49 & 1044\\
61 & 58 & 1332\\
81 & 24 & 924\\
94 & 37 & 937\\
114 & 3 & 529\\
121 & 12 & 817\\
136 & 5 & 1330\\
159 & 56 & 377\\
\end{longtable}\par
}
\medskip

to study which we use the other two characters.
The third one, $\chi_3$, is of conductor $61$, $r$ is equal to $1861$ and $I=(1861,i\omega\xi-1833)$, where $\omega$ and $\xi$ are as above.
One checks that $m_{\chi_3}$ is in $I$.
We obtain
\newline

{\centering
\begin{longtable}{c|c|c|c|c}
\hline
$n_0$ & $A_{\chi_3}(2n_0)$ & $B_{\chi_3}(2n_0)$ & $n$ mod  $1861$ & $\left(\frac{\cdot}{\cdot}\right)$\\
\hline
5& 1150& 616& 1371& -1\\
56& 82& 616& 490& -1\\
49& 1153& 663& 1631& 1\\
58& 1043& 1000& 1555& 1\\
24& 1347& 1347& 924& -1\\
37& 1347& 1347& 937& -1\\
3& 957& 1000& 306& 1\\
12& 173& 663& 230& 1\\
5& 1150& 616& 1371& -1\\
56& 82& 616& 490& -1\\
\end{longtable}\par
}
\medskip

We have to study the following classes
\newline

{\centering
\begin{longtable}{c|c|c|c}
\hline
$n$ mod $175$ & $n$ mod $61$ & $n$ mod $1861$ & $n$ mod 1861\\
\hline
16 & 5 & 1484 & 1371\\
39 & 56 & 531 & 490\\
81 & 24 & 924&924\\
94 & 37 & 937&937\\
136 & 5 & 1330&1371\\
159 & 56 & 377&490\\
\end{longtable}\par
}
\medskip

and, by requiring the compatibility of the classes modulo $1861$, we are left with
\newline

{\centering
\begin{tabular}{c|c|c}
\hline
$n$ mod $175$ & $n$ mod $61$ & $n$ mod $1861$ \\
\hline
81 & 24 & 924\\
94 & 37 & 937\\
\end{tabular}\par
}
\medskip

for which we use the fourth character.
The last one, $\chi_4$, is of conductor $61$, $r$ is equal to $41$ and $I=(41,i\xi-33)$, where $\xi$ is has above. We see again that $m_{\chi_4}$ is in $I$.
Through PARI/GP one gets
\newline

{\centering
\begin{tabular}{c|c|c|c|c}
\hline
$n_0$ & $A_{\chi_4}(2n_0)$ & $B_{\chi_4}(2n_0)$ & $n$ mod  $41$ & $\left(\frac{\cdot}{\cdot}\right)$\\
\hline

24& 13& 13& 14& 1\\
37& 13& 13& 27& 1\\

\end{tabular}\par
}
completing the proof of Theorem \ref{thm 2}.

\section{Example of code}\label{app B}

With the aim of giving an example of the code used, we just write the one relative to the first character $\chi_1$.
We constructed $\chi_1$ by
\begin{verbatim}
car25modI1(n)={
   my(a,b); a=n%25; b=0;
   if(gcd(a,5)>1,return(0),  
      while(2^b%25!=a,
         b++;
      );
   );
   return(8^b);
}

car7modI1(n)={
   my(a,b); a=n%7; b=0;
   if(gcd(a,7)>1,return(0),  
      while(3^b%7!=a,
         b++;
      );
   );
   return(47^b);
}

car175modI1(n)={
   return(car7modI1(n)*car25modI1(n));
}
\end{verbatim}
and we checked that $m_{\chi_1}\in I$ through

\begin{verbatim}
mcar175modI1={
   my(a);
   a=sum(x=1,175,x*car175modI1(x));
   a=a%61;
   return(a)
}
\end{verbatim}

To compute the various entries of the table we made use of
\begin{verbatim}
Acar175modI1(n)={
   my(a);
   a=sum(C=0,174,
        sum(D=0,174,
           car175modI1(D^2-C^2-n*C*D)*ceil((n*C-D)/175)*(C-175)
        );
     );
return(a%61);
}


Bcar175modI1(n)={
   my(a);
   a=sum(C=0,174,
        sum(D=0,174,
           car175modI1(D^2-C^2-n*C*D)*C*(C-175)
        );
     );
return(a%61);
}

Matrix1(v)={
   my(m,l); l=length(v);
   m=matrix(l,5);
   for(i=1,l,
      m[i,1]=v[i];
   );
   for(i=1,l,
      m[i,2]=Acar175modI1(2*v[i]);
   );
   for(i=1,l,
      m[i,3]=Bcar175modI1(2*v[i]);
   );
   for(i=1,l,
      m[i,4]=(v[i]-175*m[i,2]/(2*m[i,3]))%61;
   );
   for(i=1,l,
      m[i,5]=kronecker(v[i]^2+1,61);
   );
return(m);
}

\end{verbatim}

\end{document}